\documentclass[12pt]{article}
\usepackage{amssymb}
\usepackage{float}
\usepackage{multirow}

\usepackage{amsmath}
\numberwithin{equation}{section}
\usepackage{booktabs}
\usepackage{float}
\usepackage{amsthm}
\usepackage{xcolor}
\usepackage{arydshln}
\newtheorem{thm}{Theorem}[section]
\newtheorem{lem}{Lemma}[section]
\newtheorem{prop}{Proposition}[section]
\newtheorem{remark}{Remark}[section]
\usepackage{amssymb,amsmath}
\usepackage{a4wide}
\usepackage{graphicx}
\usepackage{setspace}
\usepackage{natbib}
\usepackage{multicol}
\def\i{\mathrm{i}}

\usepackage{enumerate}
\newcommand{\Cov}{{\rm Cov}}
\newcommand{\Var}{{\rm Var}}
\title{From nonstationarity to stationarity via $1/f$ noise: discrete Fourier transforms and sample mean asymptotics for testing}
\author{ Mohamedou Ould Haye$^1$\footnote{corresponding author : {mohamedouhaye@cunet.carleton.ca}} \and
  Anne Philippe$^2$
}
\date{$^1$\small  School of Mathematics and Statistics. \\
Carleton University, 1125 Colonel By Dr. Ottawa, ON, Canada, K1S 5B6
\\ $^2$ Nantes Université, CNRS, Laboratoire de Mathématiques Jean Leray, LMJL,
UMR 6629, F-44000 Nantes, France}
\begin{document}
\maketitle
\begin{abstract}
We study the asymptotic behaviour of different statistics for time series exhibiting long memory and nonstationarity. For processes with memory parameter $d\in(-1/2,3/2)$, we derive the joint limiting distribution of  discrete Fourier transforms at a fixed number of Fourier frequencies, with a unified normalization. The resulting limits are Gaussian with an explicit covariance structure.
Particular attention is given to the boundary case $d=1/2$, also known as $1/f$ noise. We show that logarithmic corrections yield nondegenerate limits for sample mean and sample variance leading to explicit asymptotic distributions of $\chi^2$ type. We construct a statistic that combines the sample mean, the sample variance, and low-frequency periodogram ordinates, designed so that, at the boundary case $(d=1/2)$, it admits a tractable limit distribution.
 These results are applied to construct a consistent  parameter-free  test of nonstationarity against long memory stationarity.
\end{abstract}
\textbf{Keywords:} Long memory, dependence, stationarity and nonstationarity, spectral methods, discrete Fourier transform, periodogram, nonparametric testing.
\section{Introduction}
An extension of the usual class of long- and short-memory processes, for which the memory parameter $d$ lies in $(-1/2, 1/2)$, to the broader range $d \in (-1/2, 3/2)$ has been introduced by \cite{Ray}. In this framework, processes with $d \in (-1/2, 1/2)$ correspond to the stationary case, and those with $d \in [1/2, 3/2)$ to the nonstationary case. In the latter case, the increments belong to the stationary framework with memory parameter $d-1 \in [-1/2, 1/2)$. An extensive literature is available for the range $d \in (-1/2, 1/2)$, both in terms of asymptotic theory and inferential methods; see, for example, \cite{MR2977317} for a review.
Statistical inference for the nonstationary processes has been studied by several authors. For instance, \cite{VELASCO1999325} extend the semiparametric estimator of \cite{Robinson1995}, while further contributions include \cite{Shao2010} and \cite{Abadir2007}, among many others.
Testing stationarity has also been the focus of several studies. In particular, distinguishing long-memory dependence from genuine nonstationarity remains a fundamental challenge in time series analysis, as both phenomena lead to slowly decaying sample autocovariances and similar low-frequency behaviour.
For fully parametric models, \cite{Robinson,Robinson2003} addresses the problem of testing nonstationarity against stationarity in the presence of long memory; his approach is described in Section~\ref{Robinson method}.\\
To test stationarity in presence of long memory against nonstationarity, \cite{MR2328526} propose a time-domain approach ($V/S$ test)  based on the asymptotic behaviour of partial sums and the estimation of the memory parameter $d$. However, the  V/S empirical size becomes very high compared to the nominal level as $d$ approaches the  boundary $1/2$, where the distinction between stationary and nonstationary behaviour becomes particularly subtle.

In this paper, we provide a nonparametric testing procedure of nonstationarity.
 Specifically, we consider the hypothesis testing, 
\[
\begin{aligned}
H_0:\quad &
\left\{
\begin{array}{l}
X_t\textrm{ is nonstationary } \; \tfrac12 \le d < \tfrac32, \\
\text{or} \\
X_t=g_n(t)+Y_t,\,\, Y_t\ \textrm{is zero-mean stationary with } -\tfrac12 < d < \tfrac12,\\
\textrm{with }g_n(t)\textrm{ being a nonparametric trend,}
\end{array}
\right.
\\[1em]
H_1:\quad & X_t\textrm{ is stationary }\;\textrm{with } -\tfrac12 < d < \tfrac12.
\end{aligned}
\]
The null hypothesis therefore encompasses both stochastic and deterministic sources of nonstationarity. The former corresponds to fractional integration with $d\ge 1/2$, while the latter allows for deterministic components such as structural breaks in the mean. The deterministic component $g_n(t)$ will be specified later.\\
The boundary value $d=1/2$ marks the transition between stationary and nonstationary regimes and corresponds to the so--called $1/f$ noise model, referred to by \cite{Robinson} as the ``just nonstationary'' case.\\
We construct a test statistic ($Q_n(s)$) from the sample mean, the sample variance, and a finite number of low Fourier frequencies, and discriminates between the nonstationary and stationary regimes without requiring estimation of the memory parameter $d$. The resulting procedure provides a simple and computationally efficient method for distinguishing stationary behaviour from fractional or deterministic sources of nonstationarity. In particular, the procedure does not rely on parametric assumptions on the linear filter, as in \cite{Robinson}. 

The key theoretical result, Theorem~\ref{finite dim}, establishes the asymptotic behaviour of the discrete Fourier transform (DFT)  of the observed series at different frequencies under both stationary and nonstationary regimes.
Note that when \(d\in(-1/2,1/2)\), under assumptions on the spectral density, \cite{MR1307342} derived the asymptotic distribution of the periodogram at low frequency ordinates, without going through DFT asymptotic behaviour.\\
 The limiting distribution of $Q_n(s)$ can be expressed as a finite weighted sum of independent $\chi^2(1)$ random variables, where the weights are the eigenvalues of a positive--definite covariance matrix. This tractable representation contrasts with the stochastic--integral limits typically arising in classical time--domain tests (for instance, \cite{DickeyFuller1979} and  \cite{MR2328526}).
Beyond its role in constructing the test statistic $Q_n(s)$, Theorem~\ref{finite dim} is of independent interest, as it characterizes the asymptotic behaviour of the DFT for both stationary and integrated processes within a unified framework.
Theorem~\ref{1 over f1} and Proposition~\ref{unit root1} provide insight into the behaviour of partial sums at the stationarity--nonstationarity boundary $d=1/2$, showing that a normalization factor of $\log(n)$ is required for the sample mean and sample variance to admit nondegenerate limits in this case. This behaviour of the sample mean motivates its inclusion in the test statistic, rather than building it solely from periodograms at Fourier frequencies.
More generally, Theorem~\ref{1 over f1} provides the asymptotic distribution over the entire nonstationary range $d \ge 1/2$, thereby covering the full class of nonstationary processes with $1/2\le d<3/2$.
Simulation experiments illustrate that the proposed statistic maintains empirical rejection rates close to the nominal level at the boundary case \(d=1/2\). The experiments also show that the method remains stable in the presence of deterministic components such as structural breaks.
The remainder of the paper is organized as follows. Section \ref{sec:2} presents the main theoretical results. Section \ref{simulation} describes the testing procedure. Section \ref{robinson sim} reports simulation results illustrating the performance of the proposed statistic ($Q_n(s)$) alongside the nonstationarity tests proposed in \cite{Robinson}.
Section \ref{proofs} contains the proofs.
\section{Main Results}\label{sec:2}
The class of stationary processes that will be considered here corresponds to linear processes \((X_t)_{t \ge 0}\)  with memory parameter \(d \in [-\tfrac{1}{2}, \tfrac{1}{2})\)   
\begin{equation}\label{linear}
X_t =\mu+\sum_{j=0}^\infty a_j\, \epsilon_{t-j},
\end{equation}
where $\mu\in\mathbb{R}$, \((\epsilon_t)\) are independent and identically distributed random variables with zero mean, variance \(\sigma_\epsilon^2 = \mathbb{E}(\epsilon_1^2)\), and finite fourth moment. 
The coefficients \((a_j)\) determine the degree of memory through their asymptotic behavior:
\begin{equation}\label{summable}
\sum_{j=0}^\infty |a_j| < \infty, \quad \sum_{j=0}^\infty a_j\neq0,\qquad\text{when } d = 0,
\end{equation}
\begin{equation}\label{positived}
a_j \sim c(d)\, j^{-1+d}, \quad \text{for } 0 < d < \tfrac{1}{2},
\end{equation}
\begin{equation}\label{negatived1}
\sum_{j=0}^\infty a_j = 0, \qquad 
a_j = c(d)\, j^{-1+d}\,(1 + O(j^{-1})), \quad \text{for } -\tfrac{1}{2} \le d < 0,
\end{equation}
where \(c(d) \neq 0\).  \\\\
    Condition (\ref{summable}), imposed when $d=0$, implies that the corresponding covariance function $\gamma$ of $X_t$ satisfies
 $$
 \sum_{h=-\infty}^\infty\vert\gamma(h)\vert<\infty.
 $$
 Also, according to Proposition 3.2.1. of \cite{MR2977317}, when $d\neq 0$ $(-1/2<d<1/2)$  the conditions \eqref{positived} and   (\ref{negatived1}) imply that
 \begin{equation}\label{gammad}
\gamma(h)\sim C(d) h^{2d-1},\qquad\textrm{as }h\to\infty,\qquad C(d)=\sigma^2_\epsilon c^2(d)B(d,1-2d),
 \end{equation}
 where $B$ is the $\beta$ function (extended to the interval $(-1,0)$), and 
 $a_n\sim b_n$ means that $a_n/b_n\to1$ as $n\to\infty$.\\\\
We say that nonstationary process \((X_t)\) is an integrated process with memory parameter \(1/2 \le d < 3/2\), if its first-differenced process
\[
Y_t = X_t - X_{t-1}
\]
is stationary linear  process with memory parameter \(d - 1\) satisfying \eqref{linear} with $\mu=0$, and  one of \eqref{summable}, \eqref{positived} or \eqref{negatived1}.
Let
$\Sigma(d)$ be the block matrix
\begin{equation}\label{Sigma}
\Sigma(d) =
\begin{pmatrix}
\Sigma^{(c)}(d) & 0\\[3pt]
0 &\Sigma^{(s)}(d)
\end{pmatrix},
\end{equation}
where for 
 $i,j=1,\ldots,s$, 
\[
\Sigma^{(c)}_{ij}(d) =
\begin{cases}
\displaystyle \frac{\delta(-1/2)}{2}\int_{[0,1]^2}\!\cos(2\pi i x)\cos(2\pi j y)\,
\bigl(-\log|x-y|\bigr)\,dx\,dy,
& \text{if } d=\tfrac{1}{2},\\[10pt]
\displaystyle\frac{\delta(d-1)}{2}\int_{[0,1]^2}\!\cos(2\pi i x)\cos(2\pi j y)\,
|x-y|^{2d-1}\,dx\,dy,
& \text{if } 1/2<d<3/2,\\[10pt]
-\delta(d)\Bigg[a_{ij}(d)+
2\pi^2 i j\displaystyle\int_{[0,1]^2}\sin(2\pi ix)\sin(2\pi jy)\vert x- y\vert^{2d+1}dxdy\Bigg], 
& \text{if } -1/2<d<1/2,
\end{cases}
\]
where
$$
a_{ij}(d)=1-(2d+1)\int_0^1x^{2d}\left(\cos(2\pi ix)+\cos(2\pi jx)\right)dx,
$$
and
\begin{equation}\label{const delta}
\delta(d)=\begin{cases}8\sigma^2_\epsilon c^2(-1/2),&\textrm{if }d=-1/2,\\\\
\sigma^2_\epsilon\left(\displaystyle\sum_{i=0}^\infty a_i\right)^2,&\textrm{if }d=0,\\\\
   \frac{C(d)}{d(2d+1)},&\textrm{if }0<\vert d\vert<1/2.
\end{cases}
\end{equation}
Similarly, we define
\[
\Sigma^{(s)}_{ij}(d) =
\begin{cases}
\displaystyle\frac{\delta(-1/2)}{2} \int_{[0,1]^2}\!\sin(2\pi i x)\sin(2\pi j y)\,
\bigl(-\log|x-y|\bigr)\,dx\,dy,
& \text{if } d=\tfrac{1}{2},\\[10pt]
\frac{\delta(d-1)}{2}\displaystyle\int_{[0,1]^2}\!\sin(2\pi i x)\sin(2\pi j y)\,
|x-y|^{2d-1}\,dx\,dy,
& \text{if } 1/2<d<3/2,\\[10pt]
-\delta(d)(2\pi^2 ij)\displaystyle\int_{[0,1]^2}\!\cos(2\pi i x)\cos(2\pi j y)\,
|x-y|^{2d+1}\,dx\,dy,
& \text{if } -1/2<d<1/2.
\end{cases}
\]
The following lemma will be used in the proof of Theorem \ref{finite dim}.
\begin{lem}\label{invertible}
The matrices $\Sigma^{(c)}(d)$ and $\Sigma^{(s)}(d)$ are positive definite, and hence $\Sigma(d)$ is positive definite as well. 
\end{lem}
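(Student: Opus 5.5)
The plan is to show that each quadratic form $u^\top\Sigma^{(c)}(d)u$ and $u^\top\Sigma^{(s)}(d)u$, for $u\in\mathbb{R}^s\setminus\{0\}$, equals a strictly positive constant times a spectral integral
\[
\kappa(d)\int_{\mathbb{R}}|\widehat{h}_u(\lambda)|^2\,|\lambda|^{-2d}\,d\lambda ,
\]
with $\kappa(d)>0$. Here
\[
h_u(x)=\mathbf 1_{[0,1]}(x)\sum_{j=1}^s u_j\cos(2\pi jx)\qquad\text{or}\qquad h_u(x)=\mathbf 1_{[0,1]}(x)\sum_{j=1}^s u_j\sin(2\pi jx),
\]
according to the block. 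Since $\Sigma(d)$ is block diagonal, its positive definiteness then follows from that of the two blocks.

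\textbf{Step 1: the case $1/2\le d<3/2$.} The kernels $|x-y|^{2d-1}$ (with $2d-1\in(0,2)$) and $-\log|x-y|$ are conditionally (negative or positive) definite. Their generalized Fourier transforms are $c_\gamma|\lambda|^{-1-\gamma}$ with $\gamma=2d-1$, respectively a multiple of $|\lambda|^{-1}$ for the logarithm, and these are valid when tested against functions of zero integral. Both $\cos(2\pi jx)$ and $\sin(2\pi jx)$ have zero integral on $[0,1]$, so $\int h_u=0$ and the identity
\[
\int\!\!\int h_u(x)h_u(y)K(x-y)\,dx\,dy=c\int|\widehat h_u(\lambda)|^2|\lambda|^{-2d}\,d\lambda
\]
holds. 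The sign of $c$ (for instance $c_\gamma<0$ for $\gamma\in(0,1)$ and $c_\gamma>0$ for $\gamma\in(1,2)$) must be matched against the sign of $\delta(d-1)=C(d-1)/((d-1)(2d-1))$. Here $C(d-1)$ carries the sign of the covariance and hence of $d-1$; for $d=1/2$ one uses $\delta(-1/2)=8\sigma_\epsilon^2c^2(-1/2)>0$. I would check that the product is positive in each sub-range. A cleaner route to the sign is to note that these blocks are, by construction, the limiting covariances of the real and imaginary parts of normalized DFTs of the integrated process, computed in the proof of Theorem~\ref{finite dim}. This gives positive semidefiniteness for free, so only the strictness remains.

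\textbf{Step 2: the case $-1/2<d<1/2$.} I would integrate by parts. Since $\partial_x\cos(2\pi ix)=-2\pi i\sin(2\pi ix)$, the term $2\pi^2 ij\int\!\!\int\sin\sin|x-y|^{2d+1}$ equals $\tfrac12\int\!\!\int f'(x)g'(y)|x-y|^{2d+1}\,dx\,dy$. Integrating by parts twice moves the derivatives onto the kernel and produces boundary terms at $x,y\in\{0,1\}$. These boundary terms should reproduce exactly $a_{ij}(d)$: the values $\cos(0)=\cos(2\pi j)=1$ give the $1$, and the remaining integrals against $x^{2d}$ come from one-sided derivatives of $|x-y|^{2d+1}$ at the endpoints. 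The outcome is again the distributional pairing of $\mathbf 1_{[0,1]}\cos(2\pi j\cdot)$, viewed as a jump function, with the kernel $\partial^2|x|^{2d+1}\propto|x|^{2d-1}$ (or $\delta_0$ when $d=0$). Its Fourier transform is a multiple of $|\lambda|^{-2d}$, which is precisely the spectral integral above. Equivalently, one can read this block as the limit of $\Var\big(n^{-d-1/2}\sum_j u_j\,\mathrm{Re}\,w_n(\lambda_j)\big)=\int|\sum_j u_jD_n(\lambda-\lambda_j)|^2f(\lambda)\,d\lambda$ after the rescaling $\lambda\mapsto\lambda/n$.

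\textbf{Step 3: strictness, which is the main obstacle.} Once the spectral form is established, strictness is immediate. The function $h_u$ is a nonzero compactly supported $L^2$ function whenever $u\neq0$, because the trigonometric functions are linearly independent on $[0,1]$. Hence $\widehat h_u$ is entire and not identically zero, so it vanishes only on a discrete set, and the integral is strictly positive. It is also finite: $\widehat h_u(0)=0$ gives $|\widehat h_u(\lambda)|^2=O(\lambda^2)$ near $0$, which controls $|\lambda|^{-2d}$ for $d<3/2$, while $|\widehat h_u(\lambda)|=O(|\lambda|^{-1})$ controls the tail since $d>-1/2$. The real difficulty is the bookkeeping in Steps 1 and 2. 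The boundary terms must combine into $a_{ij}(d)$ exactly, and the constants $\delta(\cdot)$ must have the right sign, including the negative-$d$ range where $c(d)$ and $B(d,1-2d)$ interact. I would first attempt this through the DFT-limit interpretation, where nonnegativity is automatic, and compute only what is needed to identify the limiting spectral integral.
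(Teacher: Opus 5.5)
Your plan is correct and follows the paper's route. The paper also writes $u'\Sigma u$ as a weighted $L^2$ norm of $\widehat\Phi$, where $\Phi=\mathbf 1_{[0,1]}\sum_j u_j\cos(2\pi j\cdot)$. It does this through the identities $-\log r=\int_0^\infty(e^{-rs}-e^{-s})s^{-1}\,ds$ and $|u|^\beta=c_\beta\int_0^\infty(1-\cos 2\pi us)s^{-1-\beta}\,ds$ with $c_\beta>0$. It removes the constant term using $\int_0^1\Phi=0$ and gets strictness from $\widehat\Phi\not\equiv0$. The weights it obtains, $|\xi|^{-1}$ and $s^{-1-\beta}=s^{-2d}$, are exactly your $|\lambda|^{-2d}$.

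The one place you go further is $\Sigma^{(c)}(d)$ for $-1/2<d<1/2$. The paper's single line (``$\beta=2d+1$, go back to case 2'') covers the double integral, and all of $\Sigma^{(s)}(d)$, where $\sin$ vanishes at the endpoints. It does not cover $a_{ij}(d)$, whose quadratic form is not sign-definite in general. Your integration by parts is precisely the missing step, and it can be written cleanly as follows. Set $\mu:=h_u'=\mathbf 1_{[0,1]}\sum_j u_j(-2\pi j)\sin(2\pi j\cdot)+\bigl(\sum_j u_j\bigr)(\delta_0-\delta_1)$, a signed measure of total mass zero. One checks that $u'\Sigma^{(c)}(d)u=-\tfrac{\delta(d)}{2}\iint|x-y|^{2d+1}\,d\mu(x)\,d\mu(y)$, and the terms involving the atoms give exactly $2\sum_{i,j}u_iu_ja_{ij}(d)$. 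The case-2 identity applied to $\mu$ then gives your weight, since $|\widehat\mu(\lambda)|=|\lambda|\,|\widehat{h_u}(\lambda)|$. For $-1/2<d<0$ this is also better than a pairing with $|x|^{2d-1}$, which is not locally integrable.

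One correction to Step 1: there are no signs to match across sub-ranges.
\begin{itemize}
\item The constant in the generalized Fourier transform of $|x|^\gamma$ is negative for every $\gamma\in(0,2)$, not only on $(0,1)$. With the paper's $c_\beta>0$, one has $\iint\Phi(x)\Phi(y)|x-y|^\beta\,dx\,dy=-c_\beta\int_0^\infty s^{-1-\beta}|\widehat\Phi(2\pi s)|^2\,ds<0$ whenever $\int\Phi=0$.
\item $\delta(d-1)>0$ on all of $1/2<d<3/2$, because it is the constant in $\Var(S_n(Y))\sim\delta(d-1)n^{2d-1}$. The sign of $C(d-1)$ is cancelled by the factor $d-1$ in the denominator.
\end{itemize}
Positivity therefore comes entirely from the minus sign in the covariance limit \eqref{cumsum}. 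The displayed definition of $\Sigma^{(c)}(d)$ and $\Sigma^{(s)}(d)$ for $1/2<d<3/2$ omits that sign, and read literally it is negative definite. The paper's Case 2 tacitly uses the signed version. Your alternative route through the DFT covariance limits is the right fix. It is not circular, since those limits are computed in the proof of Theorem~\ref{finite dim} without the lemma. You should state explicitly that the signed matrix is the one you prove positive definite.
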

In the next theorem \ref{finite dim}  we give the  asymptotic
joint distribution of the real and imaginary parts of the discrete Fourier transform (DFT) for stochastic processes  with memory parameter $-1/2<d<3/2$. This multivariate convergence is central to establish the subsequent results in Section 3. Theorem \ref{1 over f1}  provides asymptotic distribution of the sample mean and gives $\log(n)$ correction that is needed to obtain a nondegenerate asymptotic distribution in case of border $d=1/2$.
\begin{prop}
     \label{unit root1} (i) If $Y_t$ is a covariance-stationary process having a spectral density of the form
$$
f(\lambda)\sim c_f\vert\lambda\vert,\qquad\textrm{as }\lambda\to0,\qquad c_f>0,
$$
then
$$
\textrm{Var}(Y_1+\cdots+Y_n)\sim 4c_f\log(n),\qquad\textrm{ as }n\to\infty.
$$
(ii) If $Y_t$ is a linear process of the form \eqref{linear} with $d=-1/2$
   then the  spectral density of $Y_t$ satisfies
$$
f(\lambda)\sim c_f\vert\lambda\vert,\qquad c_f=2\sigma^2_\epsilon c(-1/2)^2,
$$
and therefore
\begin{equation}\label{var sum 1/2}
    \textrm{Var}(Y_1+\cdots+Y_n)\sim\delta(-1/2)\log(n),\qquad\textrm{as }n\to\infty.
\end{equation}
\end{prop}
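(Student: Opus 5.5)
Part (i) rests on the spectral representation of the variance of partial sums. Writing $S_n=Y_1+\cdots+Y_n$ and using $\gamma(h)=\int_{-\pi}^{\pi}e^{ih\lambda}f(\lambda)\,d\lambda$, I will start from
\[
\Var(S_n)=\int_{-\pi}^{\pi} f(\lambda)\,F_n(\lambda)\,d\lambda,\qquad F_n(\lambda)=\Bigl|\sum_{k=1}^n e^{ik\lambda}\Bigr|^2=\frac{\sin^2(n\lambda/2)}{\sin^2(\lambda/2)} .
\]
Fix $\varepsilon>0$ and split the integral into $|\lambda|\le\eta$ and $\eta<|\lambda|\le\pi$. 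The second piece is $O(1)$, since $F_n(\lambda)\le 1/\sin^2(\eta/2)$ there and $f$ is integrable.

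On $|\lambda|\le\eta$ I choose $\eta$ small enough that both $(1-\varepsilon)c_f|\lambda|\le f(\lambda)\le(1+\varepsilon)c_f|\lambda|$ and $\sin^2(\lambda/2)\sim\lambda^2/4$ hold within a factor $1\pm\varepsilon$. This reduces the problem to
\[
2\cdot 4c_f\int_0^\eta \frac{\sin^2(n\lambda/2)}{\lambda}\,d\lambda
=8c_f\int_0^{n\eta/2}\frac{\sin^2 u}{u}\,du .
\]
Writing $\sin^2u=\tfrac12-\tfrac12\cos 2u$, the cosine part contributes $O(1)$ (it is a convergent improper integral at infinity and bounded near $0$ after combining), so the integral equals $\tfrac12\log n+O(1)$. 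Hence $\Var(S_n)=(1+O(\varepsilon))\,4c_f\log n+O_\eta(1)$. Letting $n\to\infty$ and then $\varepsilon\to0$ gives $\Var(S_n)\sim 4c_f\log n$.

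For (ii) I use $f(\lambda)=\frac{\sigma_\epsilon^2}{2\pi}|A(\lambda)|^2$ with $A(\lambda)=\sum_{j\ge0}a_je^{ij\lambda}$. Since $\sum a_j=0$, we can write $A(\lambda)=\sum_{j\ge1}a_j(e^{ij\lambda}-1)$. I will replace $a_j$ by $c\,j^{-3/2}$, where $c=c(-1/2)$. The error term $a_j-cj^{-3/2}=O(j^{-5/2})$ gives $\sum_j O(j^{-5/2})\,|e^{ij\lambda}-1|\le\sum_j O(j^{-5/2})\min(j\lambda,2)=O(\lambda)=o(\lambda^{1/2})$.

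For the main term, a Riemann-sum argument with $x=j\lambda$ gives
\[
c\sum_{j\ge1} j^{-3/2}(e^{ij\lambda}-1)\sim c\,\lambda^{1/2}\int_0^\infty x^{-3/2}(e^{ix}-1)\,dx=c\,\lambda^{1/2}\,\Gamma(-\tfrac12)e^{-i\pi/4}.
\]
This step has two parts. Near $x=0$, the integrand is $O(x^{-1/2})$, and dominated convergence handles it on $j\lambda\le M$. For $j\lambda>M$, the tail is controlled by $\sum_{j>M/\lambda}j^{-3/2}=O((\lambda/M)^{1/2})$ for the $-1$ part, and by Abel summation on $\sum j^{-3/2}e^{ij\lambda}$ for the oscillating part.

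Since $|\Gamma(-1/2)|=2\sqrt\pi$, this yields $|A(\lambda)|^2\sim 4\pi c^2|\lambda|$. Therefore $f(\lambda)\sim 2\sigma_\epsilon^2c^2|\lambda|$, which is $c_f$ as claimed. Part (i) then gives $\Var(S_n)\sim 8\sigma_\epsilon^2c^2\log n=\delta(-1/2)\log n$ by \eqref{const delta}.

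The main obstacle is the Riemann-sum/Abelian step for $A(\lambda)$, namely uniform control of the oscillatory tail. If the direct estimate proves awkward, I would fall back on summation by parts: use the partial sums $\sum_{k\le j}e^{ik\lambda}=O(1/\lambda)$ together with the monotone decay of $j^{-3/2}$.
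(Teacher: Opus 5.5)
Your proposal is correct and follows essentially the same route as the paper. For (i), you use the spectral representation of the variance of $S_n$, with the singularity at $0$ producing $\log n$ through $\int (1-\cos u)/u\,du$; your $(1\pm\varepsilon)$ sandwich replaces the paper's decomposition $g=2c_f+h$ with a split at $1/\log n$. For (ii), you use the same decomposition $A(\lambda)=c[A_1(\lambda)-A_1(0)]+[A_2(\lambda)-A_2(0)]$, made possible by $\sum_j a_j=0$.

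There are a few differences in the details. You compute explicitly the $\lambda^{1/2}$ asymptotics of the main term, which the paper delegates to Giraitis et al. Your remainder bound $O(|\lambda|)=o(|\lambda|^{1/2})$ is the one actually needed; the paper's claimed $o(|\lambda|)$ rests on the incorrect inequality $|e^{\mathrm{i}j\lambda}-1|\le\min(|j\lambda|^2,|j\lambda|)$. Finally, the oscillatory tail needs no Abel summation, since $|e^{\mathrm{i}j\lambda}-1|\le 2$ and $\sum_{j>M/\lambda}j^{-3/2}=O((\lambda/M)^{1/2})$ already suffice.
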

\begin{thm}
\label{finite dim} 
Let $(X_t)$ be a stochastic process, integrated or stationary, with memory parameter $d \in (-1/2,\,3/2)$. Then for  Fourier frequencies  $\lambda_1,\ldots,\lambda_s$, with fixed $s\ge1$, we have 
\begin{eqnarray*}\lefteqn{
Z_n(s,d):=\Bigg[\left(\frac{1}{n^{1/2+d}}\sum_{k=1}^n\cos(k\lambda_{1})X_k\right),
\cdots,\left(\frac{1}{n^{1/2+d}}\sum_{k=1}^n\cos(k\lambda_{s})X_k\right),}\nonumber\\
&& \left(\frac{1}{n^{1/2+d}}\sum_{k=1}^n\sin(k\lambda_1)X_k\right),\cdots,\left(\frac{1}{n^{1/2+d}}\sum_{k=1}^n\sin(k\lambda_{s})X_k\right)\Bigg]\overset{d}{\rightarrow}\mathcal{N}\left(0,\Sigma(d)\right), 
\end{eqnarray*}
where $\Sigma(d)$ is defined in \eqref{Sigma}.
\end{thm}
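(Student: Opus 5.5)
Since every coordinate of $Z_n(s,d)$ is a linear form in the i.i.d. innovations, I will prove the theorem with the Cramér--Wold device and the Lindeberg CLT for weighted sums of i.i.d. variables.

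\textbf{Reduction to a weighted sum.} Fix $(u,v)\in\mathbb{R}^{2s}$ and set
\[
T_n=\sum_{j}\big(u_j\,[\text{$j$-th cosine coordinate}]+v_j\,[\text{$j$-th sine coordinate}]\big).
\]
Two preliminary observations:
\begin{itemize}
\item Because $\sum_{k=1}^n\cos(k\lambda_j)=\sum_{k=1}^n\sin(k\lambda_j)=0$ at Fourier frequencies, the mean $\mu$ and, in the integrated case, the initial value $X_0$ drop out.
\item In the stationary case $d\in(-1/2,1/2)$, write $X_k-\mu=\sum_i a_{k-i}\epsilon_i$. In the integrated case $d\in[1/2,3/2)$, write $X_k-X_0=\sum_{t=1}^k Y_t$ with $Y_t$ linear of memory $d-1\in[-1/2,1/2)$.
\end{itemize}
In both cases $T_n=\sum_{i\le n} b_{n,i}\,\epsilon_i$ with deterministic weights $b_{n,i}$. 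The Lindeberg condition then reduces, using only the finite variance of $\epsilon$, to showing
\[
\max_i b_{n,i}^2\Big/\sum_i b_{n,i}^2\to0 .
\]
This holds because each $b_{n,i}$ is a filter applied to a bounded trigonometric sequence, so $|b_{n,i}|=O(n^{d-1/2}\,n^{\max(d,0)})\,n^{-1/2-d}$-type bounds are uniformly $o(1)$ relative to the order-one total variance, once that variance is shown to converge. So the core of the proof is the covariance computation
\[
\Var(T_n)\to (u,v)^\top\Sigma(d)(u,v).
\]
Lemma~\ref{invertible} ensures the limit is nondegenerate, so the ratio argument goes through.

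\textbf{Covariances, integrated case $1/2\le d<3/2$.} Summation by parts gives
\[
\sum_k \cos(k\lambda_j)X_k=\sum_t Y_t\,C_{n,j}(t),\qquad C_{n,j}(t)=\sum_{k=t}^n\cos(k\lambda_j),
\]
and similarly for sines; these partial sums are $O(1/\lambda_j)=O(n)$ and equal to $n$ times a function like $\sin(2\pi j x)/(2\pi j)$ plus $O(1)$. The covariances are then double sums
\[
\sum_{t,t'}\gamma_Y(t-t')\,C(t)\,C(t').
\]
For $1/2<d<3/2$, so $d-1\in(-1/2,1/2)$, I will rewrite these as Riemann sums of $\iint \phi_i(x)\phi_j(y)\,|x-y|^{2d-1}$ type integrals. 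The cleanest route is to use the known limit of $\Cov(S_{[nx]},S_{[ny]})/n^{2d-1}$, where $S$ is the partial sum of $Y$: the fractional Brownian covariance with constant $\delta(d-1)$. Then integrate by parts back against $\cos$ and $\sin$, which converts the fBm covariance $\tfrac12(x^{2H}+y^{2H}-|x-y|^{2H})$ into the $|x-y|^{2d-1}$ kernel. The boundary terms vanish because the trigonometric functions integrate to zero over $[0,1]$.

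Two further points in this case:
\begin{itemize}
\item \emph{Sine--cosine cross terms.} These vanish in the limit by symmetry ($x\mapsto 1-x$) of the resulting kernels.
\item \emph{Boundary $d=1/2$.} Here I will use Proposition~\ref{unit root1}: $\Var(S_m)\sim\delta(-1/2)\log m$. Because the $\log n$ terms cancel against the zero-mean weights, what survives is $-\tfrac{\delta(-1/2)}{2}\log|x-y|$. This gives the stated $\Sigma(1/2)$ entries.
\end{itemize}

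\textbf{Covariances, stationary case $-1/2<d<1/2$, and the main obstacle.} I will apply summation by parts in the other direction, expressing $\sum_k\cos(k\lambda_j)X_k$ through the partial sums $S_k=\sum_{t\le k}(X_t-\mu)$, whose covariance is asymptotically
\[
\tfrac{\delta(d)}{2}\big(x^{2d+1}+y^{2d+1}-|x-y|^{2d+1}\big)\,n^{2d+1}.
\]
Differentiating the trigonometric weights produces the factor $2\pi^2 ij$ and the swap $\cos\leftrightarrow\sin$. The non-difference terms $x^{2d+1}$, together with the endpoint term $S_n$, produce $a_{ij}(d)$ in the cosine block. This bookkeeping of boundary terms, and justifying uniform convergence of the normalized partial-sum covariances near the diagonal (needed to pass from sums to integrals when the kernel is singular, as for $d=1/2$ and $d<0$), is the main obstacle. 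For that uniformity I plan a truncation at $|x-y|<\eta$, bounding the near-diagonal contribution by $O(\eta^{\min(1,2d+1)})$ using \eqref{gammad} or the summability in \eqref{summable}.
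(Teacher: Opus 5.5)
Your route is the paper's. You reduce to a CLT for linear forms in the i.i.d.\ innovations under a max-weight condition; your Cram\'er--Wold plus Lindeberg reduction is what the paper packages as Theorem 4.3.2 of Giraitis et al. You compute the covariances via $\mathbb{E}(S_kS_{k'})=\tfrac12\bigl(\mathbb{E}S_k^2+\mathbb{E}S_{k'}^2-\mathbb{E}S_{|k-k'|}^2\bigr)$, the first two terms being killed by the zero-sum trigonometric weights. At $d=1/2$ you use Proposition~\ref{unit root1}. For $-1/2<d<1/2$ you sum by parts onto partial sums of memory $d+1$, and the endpoint term $\cos(\lambda_j)S_n$ produces $a_{ij}(d)$.

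The step you have not really justified is the negligibility condition $\sup_i|b_{n,i}|\to0$. At the boundary $d=1/2$, which is the case the paper cares about, it does not follow from the reason you give. In your own representation $b_{n,i}=n^{-1}\sum_t C_{n,j}(t)a_{t-i}$ with $C_{n,j}(t)=O(n)$. A ``bounded filter'' estimate therefore only gives $|b_{n,i}|\le n^{-1}\cdot O(n)\cdot\sum_m|a_m|=O(1)$. Your displayed rate $n^{\max(d,0)-1}$ is not $o(1)$ for $d\ge1$ either.

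The missing ingredient is the cancellation built into memory $-1/2$, namely $\sum_{m\ge0}a_m=0$. Write $b_{n,i}=n^{-1}\sum_k\cos(k\lambda_j)\sum_{t=1}^k a_{t-i}$. The inner sum is a constant in $k$, which is annihilated by $\sum_k\cos(k\lambda_j)=0$, minus the tail $\sum_{m>k-i}a_m=O\bigl((k-i+1)^{-1/2}\bigr)$. Hence $|b_{n,i}|\le Cn^{-1}\sum_{\ell\le n}\ell^{-1/2}=O(n^{-1/2})$ uniformly in $i$. This is exactly the computation the paper does for $d^{(c)}_{n,u}$ and $d^{(s)}_{n,u}$. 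For $d\neq1/2$, correctly done crude bounds suffice: $O(n^{-1/2})$, $O(n^{1/2-d})$ or $O(n^{-1/2-d})$ according to the case.

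There is a subtler point at $d=1/2$, which the paper's own covariance display also passes over. From $\Var(S_m)\sim\delta(-1/2)\log m$ alone, the zero-sum weights remove the $\delta(-1/2)\log n$ part exactly. However, set $r(m)=\Var(S_m)-\delta(-1/2)\log m=o(\log m)$. The remainder $n^{-2}\sum_{k,k'}w_kw'_{k'}\,r(|k-k'|)$ is then a priori only $o(\log n)$, not $o(1)$, and your near-diagonal truncation does not touch it. You need $r(m)$ to be asymptotically constant on the scale $m\asymp n$. Under \eqref{negatived1} this holds, since one can check $f_Y(\lambda)=c_f|\lambda|\bigl(1+O(|\lambda|^{1/2})\bigr)$, which makes $r(m)$ convergent. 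Alternatively, compute the limit directly as $n^{-2}\int_{-\pi}^{\pi}\bigl|\sum_t C_{n,j}(t)e^{\mathrm{i}t\lambda}\bigr|^2 f_Y(\lambda)\,d\lambda$, substitute $\lambda=u/n$, and apply dominated convergence. That route needs only $f_Y(\lambda)\sim c_f|\lambda|$, and by Case~1 of Lemma~\ref{invertible} it returns $\tfrac{\delta(-1/2)}{2}\iint\cos(2\pi jx)\cos(2\pi jy)\bigl(-\log|x-y|\bigr)\,dx\,dy$.
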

The following theorem provides the asymptotic distribution of the sample mean for integrated processes. For  linear stationary processes, its behaviour is well understood (see, for example, \cite{ibragimov_linnik_1971}, Theorem~18.6.5).
\begin{thm}\label{1 over f1}
Let $(X_t)_{t\ge1}$ be an integrated process with memory parameter $\tfrac12\le d<3/2$.\\\\
{\bf Part A}
Let
$$
d_n=\begin{cases}
    \sqrt{\log(n)},&\textrm{if }d=\frac{1}{2},\\\\n^{d-\frac{1}{2}},&\textrm{if }\frac{1}{2}<d<\frac{3}{2}.
\end{cases}
$$
Then
\[
\frac{\bar X_n}{d_n} \;\xrightarrow{d}\; \mathcal N\!\left(0,\tfrac1{2d+1}\delta(d-1)\right).
\]
{\bf Part B} Assume $d=1/2.$ 
\begin{enumerate}
\item
\[
\frac{1}{n\log n}\sum_{k=1}^n (X_k-\bar X_n)^2
\;\xrightarrow{L^2}\;
\frac12\delta(-1/2),
\]
and therefore
\[
\frac{1}{n\log n}\sum_{k=1}^n X_k^2
\;\xrightarrow{d}\;
\frac12\delta(-1/2)\bigl(1+\chi^2(1)\bigr),
\]
and 
$$
\frac{\bar X_n^2}{\frac{1}{n}\displaystyle\sum_{k=1}^n (X_k-\bar X_n)^2}
\;\xrightarrow{d}\;
\chi^2(1).
$$
\item Let $m=[\,\sqrt{n}\,]$ and
\[
D_n=\frac{1}{m}\Big(\sum_{i=1}^m X_i-\sum_{i=n-m+1}^n X_i\Big).
\]
Then
\[
\frac{D_n}{\sqrt{\log n}} \;\xrightarrow{d}\; \mathcal N\!\left(0,\tfrac12\delta(-1/2)\right).
\]
and therefore
\begin{equation}\label{sample mean ratio}
\frac{D_n^2}{\frac{1}{n}\displaystyle\sum_{k=1}^n (X_k-\bar X_n)^2}
\;\xrightarrow{d}\;
\chi^2(1).
\end{equation}
\item $\bar X_n/\sqrt{\log(n)}$ and $D_n/\sqrt{\log(n)}$ are asymptotically independent of $Z_n(s,1/2)$ defined in Theorem \ref{finite dim}.
\end{enumerate}
\end{thm}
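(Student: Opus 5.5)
Throughout I write $X_k=X_0+S_k$ with $S_k=Y_1+\cdots+Y_k$, where $Y_t$ is the differenced linear process with memory parameter $d-1\in[-1/2,1/2)$. I assume $X_0=0$, or at least that $X_0$ is $O_P(1)$ and hence negligible after dividing by $d_n\to\infty$. Every statistic in the theorem is either linear in the $S_k$ or a quadratic form in the $S_k$, and the $S_k$ are linear in the i.i.d. innovations $\epsilon_t$. So the plan has two parts: compute second moments from the covariance of $(S_k)$, and obtain Gaussianity from a CLT for weighted sums $\sum_t w_{n,t}\epsilon_t$. For that CLT it suffices to verify the Lindeberg-type condition $\max_t |w_{n,t}|^2/\sum_t w_{n,t}^2\to 0$, which follows from the square-summability of the $a_j$ together with the growth of the variance.

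\textbf{Covariances and Part A.} The key input is
\[
\Cov(S_j,S_k)=\tfrac12\bigl(\Var S_j+\Var S_k-\Var S_{|j-k|}\bigr).
\]
For $d>1/2$, \eqref{gammad} gives $\Var S_k\sim\delta(d-1)k^{2d-1}$ (using $\delta$ as defined in \eqref{const delta}). For $d=1/2$, Proposition \ref{unit root1}(ii) gives $\Var S_k\sim\delta(-1/2)\log k$. Write $\bar X_n=n^{-1}\sum_k S_k$ and sum the covariance over $j,k\le n$ via Riemann sums.

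For $d>1/2$ this gives $\Var\bar X_n\sim \delta(d-1)n^{2d-1}\tfrac12\int\!\!\int (x^{2d-1}+y^{2d-1}-|x-y|^{2d-1})\,dx\,dy$, and the integral evaluates to $\delta(d-1)n^{2d-1}/(2d+1)$.

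For $d=1/2$ the three logarithms each contribute $\log n$ to leading order, since $\log(|j-k|)=\log n+\log(|j-k|/n)$ and $\int\!\!\int\log|x-y|$ is finite. Hence $\Var\bar X_n\sim\tfrac12\delta(-1/2)\log n$, which agrees with $1/(2d+1)$ at $d=1/2$. Gaussianity then follows from the linear CLT above.

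\textbf{Part B1.} I will show the expectation of the sample variance has the right size, then that its variance is negligible. For the expectation,
\[
\mathbb E\Bigl[\tfrac1n\sum_k(S_k-\bar S_n)^2\Bigr]=\tfrac1n\sum_k\Var S_k-\Var\bar S_n\sim(\delta-\tfrac12\delta)\log n ,
\]
with $\delta=\delta(-1/2)$. For the $L^2$ convergence I write the sample variance as a quadratic form in $\epsilon$. Its variance splits into a Gaussian part, $2\sum_{j,k}\Gamma_{jk}^2/n^2$ with $\Gamma_{jk}=\Cov(S_j-\bar S_n,S_k-\bar S_n)$, plus a fourth-cumulant part. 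The important observation is that centering removes the common $\log n$ component: $\Gamma_{jk}=\tfrac{\delta}{2}\bigl(-\log(|j-k|/n)+O(1)\bigr)$, so $\sum_{j,k}\Gamma_{jk}^2=O(n^2)=o(n^2\log^2 n)$. The fourth-cumulant term is bounded analogously using finite $\mathbb E\epsilon^4$. The remaining claims then follow from the decomposition $\sum X_k^2=\sum(X_k-\bar X_n)^2+n\bar X_n^2$, Part A, and Slutsky's lemma.

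\textbf{Part B2.} Let $A$ be the average of $S_1,\dots,S_m$ and $B$ the average of $S_{n-m+1},\dots,S_n$, with $m=[\sqrt n]$, so $\log m\approx\tfrac12\log n$. The same covariance formula gives
\[
\Var A\sim\tfrac{\delta}{2}\log m=\tfrac{\delta}{4}\log n,\qquad
\Var B\sim\tfrac{\delta}{2}(2\log n-\log m)=\tfrac{3\delta}{4}\log n,
\]
\[
\Cov(A,B)\sim\tfrac{\delta}{2}(\log m+\log n-\log n)=\tfrac{\delta}{4}\log n .
\]
Hence $\Var D_n\sim\tfrac{\delta}{2}\log n$. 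The linear CLT gives normality, and the ratio result \eqref{sample mean ratio} follows from B1 and Slutsky's lemma.

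\textbf{Part B3.} By Cram\'er--Wold and the linear CLT, the vector $(\bar X_n/\sqrt{\log n},\,D_n/\sqrt{\log n},\,Z_n(s,1/2))$ is asymptotically Gaussian. So it suffices to show the cross-covariances vanish. The covariance of $\bar X_n$ with $n^{-1}\sum_k\cos(k\lambda_i)S_k$ is $n^{-2}\sum_{j,k}\cos(k\lambda_i)\Cov(S_j,S_k)$. The $\log n$ and $\log j$ pieces cancel because $\sum_k\cos(k\lambda_i)=0$, leaving a term built from $\log k-\log|j-k|$ that is $O(1)$ after normalisation; dividing by $\sqrt{\log n}$ sends it to $0$. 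The sine terms and $D_n$ are handled the same way.

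I expect the main obstacle to be controlling the error terms uniformly at the boundary $d=1/2$. One needs $\Var S_k=\delta\log k+O(1)$ uniformly in $k$, not merely the asymptotic equivalence from Proposition \ref{unit root1}, so that the $O(1)$ remainders survive summation in B1 and B3. I would try to extract this sharper expansion first, from the exact form $a_j=c j^{-3/2}(1+O(j^{-1}))$ assumed in \eqref{negatived1}.
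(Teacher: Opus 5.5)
Your proposal is correct and follows the same architecture as the paper: the identity $\Cov(S_j,S_k)=\frac12(V_j+V_k-V_{|j-k|})$ (with $V_h=\Var S_h$) and Riemann sums for all second moments, a CLT for linear forms in the $\epsilon_t$, the mean-plus-cumulant decomposition for the $L^2$ limit in B1, the same $\frac14,\frac34,\frac14$ bookkeeping for $\Var(D_n)$, and vanishing cross-covariances for B3.

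The real difference is in the Gaussian part of the B1 variance bound. The paper bounds $\sum_{k,k'}\Cov(U^{(k)},U^{(k')})^2$ through the innovation coefficients, via the terms $I_1,I_2,I_3$ and several Riemann integrals. You work directly with covariances, which is shorter. In fact there is an exact identity
$\Gamma_{jk}=\frac12\bigl(W_j+W_k-\bar W-V_{|j-k|}\bigr)$, where $W_j=\frac1n\sum_{l=1}^nV_{|j-l|}$ and $\bar W=\frac1n\sum_jW_j$; the $V_j$ and $V_k$ terms cancel identically. Two consequences follow.
\begin{itemize}
\item B1 needs only $V_h\sim\delta\log h$. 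Remainders that are $o(\log n)$ uniformly in $j,k$ still give $\sum_{j,k}\Gamma_{jk}^2=o(n^2\log^2 n)$.
\item The fourth-cumulant part is at most $C(n\log n)^{-2}\cdot n\sum_k\Var(S_k-\bar S_n)=O(1/\log n)$. This holds because the innovation coefficients of $S_k-\bar S_n$ are uniformly bounded.
\end{itemize}

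In B3 you are more careful than the paper. Its claimed bound $O(1/\sqrt{\log n})$ tacitly requires $V_k-\delta\log k=O(1)$, or at least $o(\sqrt{\log k})$. Mere equivalence leaves an $o(\log n)$ term, for instance in $\frac1n\sum_k\cos(k\lambda_i)(V_k-\delta\log k)$, which need not vanish after dividing by $\sqrt{\log n}$. The sharper expansion you ask for does follow from \eqref{negatived1}. Let $c=c(-1/2)$ and $A_m=\sum_{j\le m}a_j$, so that $A_m=-2c\,m^{-1/2}+O(m^{-3/2})$ for $m\ge1$. Then $V_n=\sigma_\epsilon^2\bigl(\sum_{m=0}^{n-1}A_m^2+\sum_{v\ge0}(A_{n+v}-A_v)^2\bigr)$, and each of the two sums equals $4c^2\log n+O(1)$. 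Hence $V_n=\delta(-1/2)\log n+O(1)$. You also make explicit, via Cram\'er--Wold, the joint Gaussianity that turns zero cross-covariance into independence; the paper leaves this implicit.

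One correction: the Lindeberg ratio $\max_t|w_{n,t}|^2/\sum_tw_{n,t}^2\to0$ does not follow from square-summability of $(a_j)$. At $d=1/2$ the variance grows only like $\log n$, while Cauchy--Schwarz bounds $\sum_{t\le k}a_{t-u}$ only by $O(\sqrt k)$. Instead, use $\sum_j|a_j|<\infty$ for $d\le1$, which makes these partial sums uniformly bounded. For $1<d<3/2$, use $a_j\sim c(d-1)\,j^{d-2}$, which gives the bound $O(k^{d-1})$.
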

\section{Testing Procedure for  nonstationarity against stationarity }\label{simulation}

Testing nonstationarity against stationarity is formulated as follows:
\[
\begin{aligned}
H_0:\quad &
\left\{
\begin{array}{l}
X_t\textrm{ is nonstationary } \; \tfrac12 \le d < \tfrac32, \\
\text{or} \\
X_t=g_n(t)+Y_t,\,\, Y_t\ \textrm{is zero-mean stationary with } -\tfrac12 < d < \tfrac12,\\
\textrm{with }g_n(t)\textrm{ being a nonparametric trend,}
\end{array}
\right.
\\[1em]
H_1:\quad & X_t\textrm{ is stationary }\;\textrm{with } -\tfrac12 < d < \tfrac12.
\end{aligned}
\]
Thus the null hypothesis allows for both stochastic nonstationarity generated by fractional integration with $d \ge 1/2$, and deterministic nonstationarity arising from structural breaks or trends represented by $g_n(t)$. The alternative corresponds to stationarity, that encompasses linear processes with long, persistent or short memory.

The construction of the critical region relies on the following convergence result. This proposition also allows the null hypothesis to be extended to nonstationary processes generated by a deterministic trend with additional stationary noise.
\begin{prop}\label{periodogram variance}
 For a stochastic process $X_t$, $t\ge1$, consider the following statistic,
 $$
 \tilde Q_n(s):=\frac{D_n^2}{\frac{1}{n}{\displaystyle\sum_{k=1}^n(X_k-\bar X_n)^2}}
  +\log (n)\displaystyle\frac{\sum_{j=1}^sI_n(\lambda_j)}{\displaystyle\sum_{k=1}^n(X_k-\bar X_n)^2},
 $$
where $I_n(\cdot)$ is the periodogram  built from $(X_1,...,X_n)$,  $s$ is a fixed positive integer and $ \lambda_j=2\pi j/n, \quad j=1,\ldots,s,$ denote the first Fourier frequencies:
\begin{equation}\label{non standard perio}
I_n(\lambda_j)=\frac{1}{n}\left\vert\sum_{k=1}^nX_ke^{\i k\lambda_j}\right\vert^2.
\end{equation}

\begin{enumerate}
    \item (Stationarity or stochastic trend)
Let $(X_t)$ be an integrated or stationary  stochastic process with memory parameter $d \in (-1/2,\,3/2)$. Then, 
$$
\tilde Q_n(s)\overset{d}{\rightarrow}\begin{cases}
0,&\textrm{if }d<1/2,\\\\
\displaystyle\sum_{i=0}^{2s}\psi_iQ_i,&\textrm{if }d=1/2,\\\\
\infty,&\textrm{if }d>1/2,
\end{cases}
$$
where $Q_i$ are i.i.d $\chi^2(1)$ random variables, $\psi_0=1$, and $\psi_i$, $i\ge1$ are the eigenvalues of the matrix
\begin{equation}\label{Sigma variance}
\Sigma =
\begin{pmatrix}
\Sigma^{(c)} & 0\\[3pt]
0 &\Sigma^{(s)}
\end{pmatrix},
\end{equation}
where for 
 $i,j=1,\ldots,s$, 
\[
\Sigma^{(c)}_{ij} =
\int_{[0,1]^2}\!\cos(2\pi i x)\cos(2\pi j y)\,
\bigl(-\log|x-y|\bigr)\,dx\,dy,
\]
and
$$
\Sigma^{(s)}_{ij} =
\int_{[0,1]^2}\!\sin(2\pi i x)\sin(2\pi j y)\,
\bigl(-\log|x-y|\bigr)\,dx\,dy.
$$
\item Deterministic trend (structural breaks):  Let $X_t=g_n(t)+Y_t$ where  $Y_t$ is a zero-mean linear stationary process with $d\in(-1/2,1/2)$ and $g_n(t)=n^\beta g(t/n)$, with $\beta\ge0$, and where $g$ is a piece-wise continuous\footnote{the interval [0,1] can be divided into a finite number of sub-intervals such that the function is continuous on each open sub-interval, with finite limit at each endpoint.}   function satisfying  
\begin{equation}\label{nonzero}
\int_0^1g(x)e^{\i 2\pi jx}dx\neq0, \qquad\textrm{for at least one }j\in\{1,2,\ldots,s\}.
\end{equation}
Then $\tilde Q_n(s)\overset{P}{\to}\infty.$
\end{enumerate}
\end{prop}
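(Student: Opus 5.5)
We treat the two parts separately and, within Part 1, split by the sign of $d-1/2$. Throughout, the key identity is $I_n(\lambda_j)=n^{2d}\bigl(A_{n,j}^2+B_{n,j}^2\bigr)$, where $(A_{n,j},B_{n,j})$ are the cosine and sine components of $Z_n(s,d)$ in Theorem \ref{finite dim}. Since $\sum_k e^{\i k\lambda_j}=0$ for $j\ge1$, the constant $\mu$ (and $\bar X_n$) drops out of $I_n(\lambda_j)$. Write $S_n^2=\frac1n\sum_{k=1}^n(X_k-\bar X_n)^2$.

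\emph{Case $d=1/2$.} By Theorem \ref{1 over f1}, Part B.1, $S_n^2/\log n\to\frac12\delta(-1/2)$ in $L^2$, and by Part B.2, $D_n^2/S_n^2\to\chi^2(1)$. The second term equals
\[
\log n\,\frac{n\sum_j(A_{n,j}^2+B_{n,j}^2)}{nS_n^2}=\frac{\sum_j(A_{n,j}^2+B_{n,j}^2)}{S_n^2/\log n}.
\]
By Theorem \ref{finite dim}, $Z_n(s,1/2)\to\mathcal N(0,\Sigma(1/2))$ with $\Sigma(1/2)=\frac{\delta(-1/2)}2\Sigma$. Dividing by the constant limit $\frac12\delta(-1/2)$ (Slutsky), the second term converges to $W^\top W$ with $W\sim\mathcal N(0,\Sigma)$. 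Diagonalizing $\Sigma=P\,\mathrm{diag}(\psi_i)P^\top$ gives $\sum_{i=1}^{2s}\psi_iQ_i$, and positive definiteness of $\Sigma$ follows from Lemma \ref{invertible}. Theorem \ref{1 over f1}, B.3, gives asymptotic independence of $D_n/\sqrt{\log n}$ and $Z_n(s,1/2)$. Hence the pair jointly converges, and by the continuous mapping theorem the sum converges to $Q_0+\sum_{i\ge1}\psi_iQ_i$.

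\emph{Case $d<1/2$.} Here $S_n^2\to\gamma(0)>0$ in probability by the ergodic theorem/weak LLN for linear processes (finite fourth moments give an $L^2$ argument). $D_n$ is a difference of two block means of length $m=\sqrt n$, so $D_n=O_P(m^{d-1/2})\to0$ by standard partial-sum variance $\asymp m^{2d+1}$. For the second term, Theorem \ref{finite dim} gives $I_n(\lambda_j)=O_P(n^{2d})$, so it is $O_P(\log n\cdot n^{2d-1})\to0$. For $d\le 0$ this is immediate; for $0<d<1/2$ it still holds since $2d-1<0$. Thus $\tilde Q_n(s)\to0$.

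\emph{Case $d>1/2$.} It suffices that the second term diverges, since the first is nonnegative. We need the order of $\sum_k(X_k-\bar X_n)^2$. Scaling $X_{[nx]}/n^{d-1/2}$ converges (fidi, and in the $L^2$ sense of sums) to a fractional Brownian-type limit $B$, so
\[
n^{-2d}\sum_k(X_k-\bar X_n)^2=O_P(1).
\]
We justify this only through the expectation bound $E\sum X_k^2\le Cn^{2d}$, obtained from $\Var(X_k)\sim c k^{2d-1}$ via Theorem \ref{1 over f1}-type variance computations. Then $n\cdot n^{2d}(A^2+B^2)\log n/(O_P(n^{2d}))$ seems off by a factor. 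Recomputing, the second term equals $\log n\cdot n^{2d}\sum(A^2+B^2)/\sum_k(X_k-\bar X_n)^2$, with denominator $O_P(n^{2d})$, so it is $\ge \log n\cdot\sum_j(A_{n,j}^2+B_{n,j}^2)/O_P(1)$. The numerator converges to a nondegenerate positive quadratic form, positive a.s. by Lemma \ref{invertible}, so the term tends to $\infty$ in probability. The same computation at $d=1/2$ is consistent with the normalization above.

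\emph{Part 2.} By linearity, $\sum_kX_ke^{\i k\lambda_j}=n^\beta\sum_kg(k/n)e^{\i 2\pi jk/n}+\sum_kY_ke^{\i k\lambda_j}$. The first sum is a Riemann sum equal to $n^{1+\beta}\bigl(\int_0^1g(x)e^{\i2\pi jx}dx+o(1)\bigr)$, using piecewise continuity. The second is $O_P(n^{1/2+d})=o_P(n)$. Hence for the index $j$ in \eqref{nonzero}, $I_n(\lambda_j)\ge c\,n^{1+2\beta}$ w.p.$\to1$. For the denominator,
\[
\sum_k(X_k-\bar X_n)^2\le 2n^{2\beta}\sum_k\bigl(g(k/n)-\bar g\bigr)^2+2\sum_kY_k^2=O_P(n^{1+2\beta}),
\]
since $g$ is bounded. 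Therefore the second term is $\ge c'\log n\to\infty$ in probability.

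The main obstacle is the denominator bound $\sum_k(X_k-\bar X_n)^2=O_P(n^{2d})$ for $d>1/2$, together with making sure it does not degenerate relative to the DFT. We handle it by Markov's inequality on $E\sum_kX_k^2$, since only an upper bound is needed.
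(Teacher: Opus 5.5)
Your proposal is correct and follows essentially the same route as the paper. You use Theorem~\ref{finite dim} for the periodogram ordinates and ergodicity for $d<1/2$; Theorem~\ref{1 over f1} (B.1--B.3) with Slutsky and diagonalization of $\Sigma$ for $d=1/2$; for $d>1/2$, the lower bound by the periodogram term together with Markov's inequality on $\mathbb{E}\sum_k X_k^2=O(n^{2d})$; and, for Part 2, a Riemann-sum lower bound on $I_n(\lambda_j)$ against an $O_P(n^{1+2\beta})$ denominator. The only blemish is the self-correcting detour in the $d>1/2$ paragraph (the stray factor $n$ and the unneeded fractional-Brownian remark), which you can delete, since your corrected computation is exactly the paper's.
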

\paragraph{Implementing the test} An immediate consequence of Proposition  \ref {periodogram variance} is to take as rejection region for testing $H_0$ against $H_1$, defined in the beginning of this section 
\[
R=\{\tilde Q_n(s)<q(\alpha,s)\},
\]
where $q(\alpha,s)$ denotes the $\alpha$-quantile of the limiting distribution
\[
\sum_{i=0}^{2s}\psi_i Q_i
\]
introduced in Proposition~\ref{periodogram variance}. This yields a consistent test with asymptotic level $\alpha$. Numerical studies conducted in the next section, aimed at improving the test power, used a slightly modified version 
 of the statistic $\tilde Q_n(s)$:
\[
Q_n(s):=\frac{D_n^2}{\frac{1}{n}{\displaystyle\sum_{k=1}^n(X_k-\bar X_n)^2}}
+\frac{\displaystyle\sum_{j=1}^s I_n(\lambda_j)}
{\displaystyle\sum_{k=1}^n\frac{(X_k-\bar X_n)^2}{\log(k+1)}}.
\]
Proposition~\ref{periodogram variance} remains valid with $\tilde Q_n(s)$ replaced by $Q_n(s)$. The argument is essentially unchanged, since it only requires showing that
\[
\frac{1}{n}\sum_{k=1}^n\frac{(X_k-\bar X)^2}{\log(k+1)}-
\frac{1}{n\log(n)}\sum_{k=1}^n (X_k-\bar X)^2\overset{L^2}{\longrightarrow}0,
\]
which is a consequence of summation by parts and Cesaro $L^2$ convergence.

We therefore consider the rejection region 
\[
R=\{Q_n(s)<q(\alpha,s)\}.
\]
\begin{remark}
    The statistic $Q_n(s)$ can also be used to run $1/f$ two side test, i.e., test $d=1/2$ against $d\neq 1/2$. The rejection region of level $\alpha$ would then be 
    \[
\{Q_n(s)<q(\alpha/2,s)\}\cup\{Q_n(s)>q(1-\alpha/2,s)\}.
\]
\end{remark}
\section{Simulation Results}\label{robinson sim}
\subsection{Selection of the Tuning Parameter $s$.}
Table \ref{tab:sum_compare} reports the rejection rates (i.e., the empirical size and power) of our statistic \(Q_n(s)\) for various values of \(s\), the number of frequencies used in constructing the statistic. The results are presented for two markedly different sample sizes, \(n=500\) and \(n=2000\).
We aimed at choosing $s$ that maximizes power without degrading the empirical size.
Overall, the results suggest that relatively large values of \(s\) lead to better performance in terms of power. In particular, for moderately large samples, taking (s=10) appears to provide a good tradeoff between power and size.
while \(s=25\) seems more appropriate for larger samples. These choices provide a reasonable balance between empirical size and power across the range of designs considered.
It is also worth noting that the rejection rates are affected by the presence of a short-memory component in the data generating process (DGP). In general, short-memory dynamics tend to reduce the rejection rates, leading to a decrease in both empirical size and power relative to the pure fractional noise case.
\begin{table}[H]
\centering
\caption{Rejection rates for the statistic $Q_n(s)$, using 2000 Monte-Carlo replications. For $d=0.5$, the bold entry is closest to $0.05$. For $d<0.5$, the bold entry is closest to $1$. DGP is FARIMA(0,d,0) with $\mathcal{N}(0,1)$ innovations.}
\label{tab:sum_compare}
\begin{tabular}{cccccccc}
\hline
$n$ & $d$ & $s=1$ & $s=5$ & $s=10$ & $s=15$ & $s=20$ & $s=25$ \\
\hline
\multirow{6}{*}{500} & 0.5 & \textbf{0.058} & 0.070 & 0.078 & 0.085 & 0.088 & 0.092 \\
 & 0.4 & 0.118 & 0.255 & 0.335 & 0.374 & 0.402 & \textbf{0.421} \\
 & 0.3 & 0.276 & 0.630 & 0.719 & 0.755 & 0.770 & \textbf{0.777} \\
 & 0.2 & 0.483 & 0.850 & 0.910 & 0.934 & 0.943 & \textbf{0.948} \\
 & 0.1 & 0.693 & 0.967 & 0.988 & 0.992 & 0.995 & \textbf{0.996} \\
 & 0.0 & 0.845 & 0.997 & \textbf{1.000} & 1.000 & 1.000 & 1.000 \\
\hline
\multirow{6}{*}{2000} & 0.5 & \textbf{0.050} & 0.058 & 0.060 & 0.061 & 0.066 & 0.065 \\
 & 0.4 & 0.146 & 0.339 & 0.441 & 0.487 & 0.523 & \textbf{0.540} \\
 & 0.3 & 0.352 & 0.750 & 0.834 & 0.867 & 0.885 & \textbf{0.895} \\
 & 0.2 & 0.625 & 0.941 & 0.978 & 0.988 & 0.991 & \textbf{0.993} \\
 & 0.1 & 0.845 & 0.996 & 0.999 & 1.000 & \textbf{1.000} & 1.000 \\
 & 0.0 & 0.956 & \textbf{1.000} & 1.000 & 1.000 & 1.000 & 1.000 \\
\hline
\end{tabular}
\end{table}

\subsection{Comparison to Robinson Efficient Nonstationarity Test}\label{Robinson method}

We evaluate the performance of the test statistic $Q_n(s)$ for three examples of data-generating processes and examine how it compares with the closest existing procedure.
\cite{Robinson, Robinson2003} considers fractionally integrated models of the form
\[
(1-B)^dX_t=U_t,
\qquad \text{where } U_t \text{ may exhibit short memory},
\]
and proposes a test of the nonstationarity hypothesis \(d\ge 1/2\) against the alternative \(d<1/2\). The test is based on the statistic
\[
\tilde r=\frac{\sqrt{n}}{\tilde\sigma^2\sqrt{\tilde A}}\,\tilde a,
\]
which converges to the standard normal distribution under the boundary case \(d=1/2\) and converges to zero for $d<1/2$. We follow his notation, with some minor simplifications. Here
\[
\tilde a=-\frac{1}{n}\sum_{j=1}^n 
\log\!\left(2\sin\!\left(\frac{\pi j}{n}\right)\right)
I_U(\lambda_j),\quad
\tilde\sigma^2=\frac{1}{n}\sum_{j=1}^nU^2_j,
\quad
\tilde A=\frac{2}{n}\sum_{j=1}^n
\left[
\log\!\left(2\sin\!\left(\frac{\pi j}{n}\right)\right)
\right]^2,
\]
where \(I_U\) denotes the periodogram based on \(U_t\) as defined in \eqref{non standard perio}.

In practice, \(U_t\) is obtained by applying the fractional differencing operator to the observed series,
\[
U_t=(1-B)^\frac{1}{2}X_t,
\]
which involves the infinite expansion of the fractional differencing operator and must therefore be truncated in empirical work. In addition, when \(U_t\) exhibits short memory, Robinson proposes fitting an AR(\(q\)) model to \(U_t\) and constructing the periodogram using the residuals \(\tilde U_t\) from this fit. Thus, unlike our test statistic \(Q_n(s)\), which is computed directly from the observed data \(X_t\), Robinson's procedure requires both fractional filtering and an additional prewhitening step.
\subsubsection{Fractionally integrated processes}
Table~\ref{tab:three_tests_rejection_rates_AR3} compares nonstationarity empirical rejection rates when sampling from FARIMA $(p,d,0)$ models with $p=0$ and $p=1$. To allow additional flexibility for the prewhitened version of $\tilde r$, the prewhitening step selects among low-order AR$(q)$ models with $q\le3$ using the AIC criterion, although the true model contains at most an AR(1) component. The results show that the statistic $Q_n(s)$ outperforms $\tilde r$ and its prewhitened version in the presence of an AR component. The results reveal a clear contrast between the procedures. The statistic $\tilde r$ performs well when the data-generating process coincides with the fully parametric FARIMA$(0,d,0)$ specification for which it is designed. However, its performance deteriorates markedly once an AR component is present. Prewhitening partially alleviates this loss of power in the presence of short-memory dynamics, but it tends to perform poorly when no short-memory component is present. In contrast, although the statistic $Q_n(s)$ is also affected by the presence of short-memory AR components, the deterioration in its performance is considerably less pronounced.
\subsubsection{Aggregated Random Coefficient AR(1)}
In addition to the  FARIMA DGP, we also consider in Table~\ref{tab:three_tests_rejection_rates_arcar} long memory generated through the aggregation of heterogeneous short-memory processes. Specifically, we generate random-coefficient AR(1) processes
\[
Y_{j,t}=\phi_j Y_{j,t-1}+\varepsilon_{j,t}, \qquad j=1,\ldots,M,
\]
and define the aggregated series
\[
X_{t,M}=\frac{1}{\sqrt{M}}\sum_{j=1}^M Y_{j,t}.
\]
The autoregressive coefficients $\phi_j$ are drawn from a distribution on $(0,1)$ whose density $f$ determines the behaviour of the autocovariance function of the limiting process through its behaviour near $1$. Under the assumption
\[
\mathbb{E}\!\left((1-\phi_1^2)^{-1}\right) < \infty,
\]
as $M\to\infty$, the process $X_{t,M}$ converges in distribution to a stationary Gaussian process with autocovariance function
\[
\gamma(h)=\mathbb{E}\!\left(\phi_1^{|h|}(1-\phi_1^2)^{-1}\right).
\]
See \cite{Leipus2014Aggregation,MR3075595,MR2977317} for more results on these models. 
In the simulation, we take $\phi_1$ with a density of the form
\begin{equation} \label{mixture}
     f(x) = \dfrac{2}{\beta (a,b)} (1-x^2)^{b-1 } x^{2a -1},
\end{equation}
with $a>0$ and $b>0$. Then the limiting aggregated process $X_t$ exists if $b>1$, and for $h\geq 0$
\[
\gamma(h) =
\frac{\Gamma (a+b)}{(b-1) \Gamma(a)}
\frac{\Gamma (a+h/2)}{ \Gamma(a+b-1 +h/2)}
\sim
\frac{\Gamma (a+b)}{ 2^{1-b}(b-1) \Gamma(a)} h^{1-b},\qquad\textrm{as }h\to\infty.
\]
Therefore the limiting aggregated process $X_t$ exhibits long-range dependence when $b\in(1,2)$, in the sense that the covariance function is not summable, and the corresponding long-memory parameter is $d = 1-b/2$.\\
Although the aggregated process may mimic the low-frequency behaviour of fractional models, it does not belong to the class of fractional long-memory processes when the coefficients satisfy \eqref{mixture}; see \cite{Celov01052010}. The results reported in Table~\ref{tab:three_tests_rejection_rates_arcar} indicate that the statistic $Q_n(s)$ remains relatively robust in this setting, displaying a low empirical size even though the data-generating mechanism departs from the fractional linear framework.At the same time, the test retains good empirical power. By contrast, the power of $\tilde r$ and its prewhitened version deteriorates substantially for moderately large values of $a$, which is largely expected since these tests are tailored to a specific parametric framework, namely FARIMA($p,d,0$).
\subsubsection{Random coefficient AR(1) with regime switching}
We consider another class of stationary processes  studied in \cite{Leipus2003}:
\begin{equation}\label{car}
X_t = a_tX_{t-1}+\zeta_t
\end{equation}
where \(a_t\) follows a renewal–reward structure:
\[
a_t = A_j, \qquad S_{j-1} < t \le S_j, \qquad j \in \mathbb{N},
\]
where \((S_j)\) is a renewal process, with positive integer-valued increments $\Delta_j$, and \((S_j, A_j)\) is independent of \((\zeta_t)\).
They showed that $X_t$ exhibits long memory if \((\Delta_j)\) and \((A_j)\) are i.i.d.\ satisfying
\begin{equation*}P(\Delta_1=k)\sim c_\Delta\, k^{-\alpha}, \qquad  k \to \infty,
\qquad 3 < \alpha < 4,
\end{equation*}
$$
P(A_1 = 1)=p=1-P(A_1=c)\in(0,1), \qquad 0<c<1,
$$
The process alternates between unit-root regimes, where it behaves locally like a random walk, and stationary regimes corresponding to an AR(1) process with parameter $0<c<1$. However, (\ref{car}) admits a stationary solution, see \cite{Brandt1986}.
Moreover, \cite{Leipus2003} shows that
\[
\mathrm{Cov}(X_t,X_{t+h})
   \sim Ch^{\,3 - \alpha}, 
\qquad h, \to \infty\qquad\textrm{for some positive constant }C.
\]
Table \ref{tab:three_tests_rejection_rates_renewal_rcar} compares the power of the three tests across different parameter configurations $(c,p,\alpha)$. Note that  as $p$ increases, unit-root regimes become more frequent, while values of $c$ close to one weaken the contraction effect. The results clearly show that $Q_n(s)$ substantially outperforms both $\tilde r$ and its prewhitened version.
The test $\tilde r$ exhibits virtually no power across all configurations, indicating a failure to detect stationarity in this setting. Its prewhitened version performs well only in the very favorable scenario where unit-root regimes are rare (small $p$), contraction is strong ($c$ well below one), and the sample size is large.
The performance of $Q_n(s)$ is primarily driven by the parameters $p$ and $c$. 
When contracting regimes are sufficiently frequent ( $p\le0.5$) and  $c\le0.5$, the power of $Q_n(s)$ increases rapidly with the sample size, reaching values close to one. When $p$ is small (e.g., $p=0.2$), the power remains high even for relatively large values of $c$, such as $c=0.8$. In contrast, the parameter $\alpha$, which governs the tail behavior of the renewal durations, has only a limited impact on power.

\subsubsection{Processes with structural breaks}
Finally, we consider processes with structural breaks in the mean under the null hypothesis. In this setting, nonstationarity arises from deterministic level shifts rather than from stochastic integration. Table~\ref{tab:three_tests_break} reports the rejection rates when sampling from the structural break model
\[
X_t=
\begin{cases}
Y_t, & t=1,\ldots,n/2,\\[6pt]
Y_t+\delta, & t=n/2+1,\ldots,n,
\end{cases}
\]
where $Y_t$ is a stationary FARIMA(0,$d$,0).\\
When $\delta=1$, the mean shift is smaller than the variance of $Y_t$ given by
\[
\textrm{Var}(Y_t)=\frac{\Gamma(1-2d)}{\Gamma(1-d)^2} > 1(=\delta) \qquad \text{for } d>0.
\]
As a result, the change is not systematically detected by either test. By contrast, when $\delta=1.5$ or $2$, the statistic $Q_n(s)$ detects the change, and the rejection probability of nonstationarity falls below the nominal level, as expected. The test based on $\tilde r$ still fails to do so in several cases. The prewhitened version improves the performance of $\tilde r$, particularly for moderate sample size ($n=500$), but its performance deteriorates sharply for the larger sample ($n=2000$). Overall, these results indicate that $Q_n(s)$ is able to detect this type of nonstationarity generated by structural changes in the mean.
\begin{table}[H]
\centering
\caption{Nonstationarity empirical rejection rates for the three tests $Q_n(s)$, $\tilde r$, and prewhitened $\tilde r$ under FARIMA($p$,d,0) design, $\phi$ being the AR coefficient.  We take $s=10$ for $n=500$,   and $s=25$ for $n=2000$. We used 2000 Monte-Carlo replications. Bold numbers indicate the best performance: closest to $0.05$ when $d=0.5$, and closest to $1$ when $d<0.5$.}
\label{tab:three_tests_rejection_rates_AR3}
\begin{tabular}{cccccc}
\hline
$\phi$ & $n$ & $d$ & $Q_n(s)$ & $\tilde r$ & PW $\tilde r$\\
\hline
\multirow{12}{*}{0.0} & \multirow{6}{*}{500} & 0.5 & 0.092 & \textbf{0.072} & 0.021 \\
\cdashline{3-6}
 &  & 0.4 & 0.353 & \textbf{0.874} & 0.075 \\
 &  & 0.3 & 0.733 & \textbf{1.000} & 0.078 \\
 &  & 0.2 & 0.913 & \textbf{1.000} & 0.296 \\
 &  & 0.1 & 0.989 & \textbf{1.000} & 0.717 \\
 &  & 0 & \textbf{1.000} & \textbf{1.000} & 0.956 \\
\cline{2-6}
 & \multirow{6}{*}{2000} & 0.5 & 0.069 & \textbf{0.054} & 0.010 \\
\cdashline{3-6}
 &  & 0.4 & 0.542 & \textbf{1.000} & 0.106 \\
 &  & 0.3 & 0.872 & \textbf{1.000} & 0.808 \\
 &  & 0.2 & 0.993 & \textbf{1.000} & 1.000 \\
 &  & 0.1 & \textbf{1.000} & \textbf{1.000} & \textbf{1.000} \\
 &  & 0 & \textbf{1.000} & \textbf{1.000} & \textbf{1.000} \\
\hline
\multirow{12}{*}{0.2} & \multirow{6}{*}{500} & 0.5 & \textbf{0.033} & 0.000 & 0.000 \\
\cdashline{3-6}
 &  & 0.4 & \textbf{0.208} & 0.002 & 0.002 \\
 &  & 0.3 & \textbf{0.574} & 0.352 & 0.112 \\
 &  & 0.2 & 0.821 & \textbf{0.984} & 0.190 \\
 &  & 0.1 & 0.956 & \textbf{1.000} & 0.472 \\
 &  & 0 & 0.997 & \textbf{1.000} & 0.846 \\
\cline{2-6}
 & \multirow{6}{*}{2000} & 0.5 & \textbf{0.029} & 0.000 & 0.000 \\
\cdashline{3-6}
 &  & 0.4 & \textbf{0.369} & 0.000 & 0.038 \\
 &  & 0.3 & 0.802 & \textbf{0.876} & 0.580 \\
 &  & 0.2 & 0.974 & \textbf{1.000} & 0.997 \\
 &  & 0.1 & \textbf{1.000} & \textbf{1.000} & \textbf{1.000} \\
 &  & 0 & \textbf{1.000} & \textbf{1.000} & \textbf{1.000} \\
\hline
\multirow{12}{*}{0.5} & \multirow{6}{*}{500} & 0.5 & \textbf{0.005} & 0.000 & 0.000 \\
\cdashline{3-6}
 &  & 0.4 & \textbf{0.040} & 0.000 & 0.000 \\
 &  & 0.3 & \textbf{0.226} & 0.000 & 0.000 \\
 &  & 0.2 & \textbf{0.549} & 0.000 & 0.004 \\
 &  & 0.1 & \textbf{0.808} & 0.005 & 0.062 \\
 &  & 0 & \textbf{0.919} & 0.388 & 0.295 \\
\cline{2-6}
 & \multirow{6}{*}{2000} & 0.5 & \textbf{0.002} & 0.000 & 0.000 \\
\cdashline{3-6}
 &  & 0.4 & \textbf{0.092} & 0.000 & 0.000 \\
 &  & 0.3 & \textbf{0.521} & 0.000 & 0.067 \\
 &  & 0.2 & \textbf{0.855} & 0.000 & 0.720 \\
 &  & 0.1 & 0.978 & 0.002 & \textbf{0.997} \\
 &  & 0 & \textbf{1.000} & 0.944 & \textbf{1.000} \\
\hline
\end{tabular}
\end{table}
\begin{table}[H]
\centering
\caption{Nonstationarity empirical rejection rates for the three tests $Q_n(s)$, $\tilde r$, and prewhitened $\tilde r$ under the aggregated RCAR(1) design.  We take $s=10$ for $n=500$,   and $s=25$ for $n=2000$. We used 2000 Monte-Carlo replications. Bold numbers indicate the best performance: closest to $0.05$ when $d=0.5$, and closest to $1$ when $d<0.5$.}
\label{tab:three_tests_rejection_rates_arcar}
\begin{tabular}{cccccc}
\hline
$a$ & $n$ & $d$ & $Q_n(s)$ & $\tilde r$ & PW $\tilde r$\\
\hline
\multirow{12}{*}{1} & \multirow{6}{*}{500} & 0.5 & \textbf{0.000} & \textbf{0.000} & \textbf{0.000} \\
\cdashline{3-6}
 &  & 0.4 & 0.562 & \textbf{0.958} & 0.076 \\
 &  & 0.3 & 0.778 & \textbf{0.997} & 0.122 \\
 &  & 0.2 & 0.893 & \textbf{1.000} & 0.202 \\
 &  & 0.1 & 0.947 & \textbf{1.000} & 0.370 \\
 &  & 0 & 0.969 & \textbf{1.000} & 0.502 \\
\cline{2-6}
 & \multirow{6}{*}{2000} & 0.5 & 0.000 & 0.000 & \textbf{0.006} \\
\cdashline{3-6}
 &  & 0.4 & 0.754 & \textbf{1.000} & 0.378 \\
 &  & 0.3 & 0.911 & \textbf{1.000} & 0.784 \\
 &  & 0.2 & 0.969 & \textbf{1.000} & 0.958 \\
 &  & 0.1 & 0.988 & \textbf{1.000} & 0.990 \\
 &  & 0 & 0.998 & \textbf{1.000} & 0.999 \\
\hline
\multirow{12}{*}{2} & \multirow{6}{*}{500} & 0.5 & \textbf{0.000} & \textbf{0.000} & \textbf{0.000} \\
\cdashline{3-6}
 &  & 0.4 & \textbf{0.221} & 0.071 & 0.033 \\
 &  & 0.3 & \textbf{0.482} & 0.393 & 0.141 \\
 &  & 0.2 & 0.691 & \textbf{0.803} & 0.202 \\
 &  & 0.1 & 0.816 & \textbf{0.966} & 0.182 \\
 &  & 0 & 0.881 & \textbf{0.998} & 0.203 \\
\cline{2-6}
 & \multirow{6}{*}{2000} & 0.5 & 0.000 & 0.000 & \textbf{0.001} \\
\cdashline{3-6}
 &  & 0.4 & \textbf{0.465} & 0.117 & 0.084 \\
 &  & 0.3 & 0.743 & \textbf{0.782} & 0.387 \\
 &  & 0.2 & 0.908 & \textbf{0.995} & 0.653 \\
 &  & 0.1 & 0.963 & \textbf{1.000} & 0.881 \\
 &  & 0 & 0.981 & \textbf{1.000} & 0.970 \\
\hline
\multirow{12}{*}{5} & \multirow{6}{*}{500} & 0.5 & 0.000 & 0.000 & \textbf{0.000} \\
\cdashline{3-6}
 &  & 0.4 & \textbf{0.023} & 0.000 & 0.000 \\
 &  & 0.3 & \textbf{0.074} & 0.000 & 0.000 \\
 &  & 0.2 & \textbf{0.188} & 0.000 & 0.000 \\
 &  & 0.1 & \textbf{0.350} & 0.000 & 0.001 \\
 &  & 0 & \textbf{0.521} & 0.005 & 0.008 \\
\cline{2-6}
 & \multirow{6}{*}{2000} & 0.5 & \textbf{0.000} & \textbf{0.000} & \textbf{0.000} \\
\cdashline{3-6}
 &  & 0.4 & \textbf{0.044} & 0.000 & 0.000 \\
 &  & 0.3 & \textbf{0.268} & 0.000 & 0.002 \\
 &  & 0.2 & \textbf{0.546} & 0.000 & 0.017 \\
 &  & 0.1 & \textbf{0.722} & 0.000 & 0.160 \\
 &  & 0 & \textbf{0.858} & 0.006 & 0.419 \\
\hline
\end{tabular}
\end{table}
\begin{table}[H]
\centering
\caption{Empirical rejection rates of the three tests $Q_n(s)$, $\tilde r$, and prewhitened $\tilde r$ under the renewal random-coefficient autoregressive model. The random coefficient takes values $A_j\in\{1,c\}$ with $P(A_j=1)=p$. We take $s=10$ for $n=500$ and $s=25$ for $n=2000$. We used 2000 Monte-Carlo replications. Bold numbers indicate the largest rejection rate among the three tests.}
\label{tab:three_tests_rejection_rates_renewal_rcar}
\begin{tabular}{ccccccc}
\hline
$c$ & $\alpha$ & $p$ & $n$ & $Q_n(s)$ & $\tilde r$ & PW $\tilde r$\\
\hline

\multirow{12}{*}{0.5}
& \multirow{6}{*}{3.1}
& \multirow{2}{*}{0.2}
& 500  & \textbf{0.758} & 0.002 & 0.027 \\
&  &  & 2000 & \textbf{0.946} & 0.000 & 0.834 \\
\cline{3-7}
&  & \multirow{2}{*}{0.5}
& 500  & \textbf{0.288} & 0.000 & 0.000 \\
&  &  & 2000 & \textbf{0.748} & 0.000 & 0.009 \\
\cline{3-7}
&  & \multirow{2}{*}{0.8}
& 500  & \textbf{0.004} & 0.000 & 0.000 \\
&  &  & 2000 & \textbf{0.011} & 0.000 & 0.000 \\
\cline{2-7}

& \multirow{6}{*}{3.5}
& \multirow{2}{*}{0.2}
& 500  & \textbf{0.774} & 0.002 & 0.028 \\
&  &  & 2000 & \textbf{0.983} & 0.000 & 0.939 \\
\cline{3-7}
&  & \multirow{2}{*}{0.5}
& 500  & \textbf{0.364} & 0.000 & 0.000 \\
&  &  & 2000 & \textbf{0.835} & 0.000 & 0.031 \\
\cline{3-7}
&  & \multirow{2}{*}{0.8}
& 500  & \textbf{0.004} & 0.000 & 0.000 \\
&  &  & 2000 & \textbf{0.032} & 0.000 & 0.000 \\
\hline

\multirow{12}{*}{0.8}
& \multirow{6}{*}{3.1}
& \multirow{2}{*}{0.2}
& 500  & \textbf{0.144} & 0.000 & 0.000 \\
&  &  & 2000 & \textbf{0.692} & 0.000 & 0.000 \\
\cline{3-7}
&  & \multirow{2}{*}{0.5}
& 500  & \textbf{0.003} & 0.000 & 0.000 \\
&  &  & 2000 & \textbf{0.136} & 0.000 & 0.000 \\
\cline{3-7}
&  & \multirow{2}{*}{0.8}
& 500  & \textbf{0.000} & \textbf{0.000} & \textbf{0.000} \\
&  &  & 2000 & \textbf{0.000} & \textbf{0.000} & \textbf{0.000} \\
\cline{2-7}

& \multirow{6}{*}{3.5}
& \multirow{2}{*}{0.2}
& 500  & \textbf{0.142} & 0.000 & 0.000 \\
&  &  & 2000 & \textbf{0.725} & 0.000 & 0.000 \\
\cline{3-7}
&  & \multirow{2}{*}{0.5}
& 500  & \textbf{0.011} & 0.000 & 0.000 \\
&  &  & 2000 & \textbf{0.163} & 0.000 & 0.000 \\
\cline{3-7}
&  & \multirow{2}{*}{0.8}
& 500  & \textbf{0.000} & \textbf{0.000} & \textbf{0.000} \\
&  &  & 2000 & \textbf{0.000} & \textbf{0.000} & \textbf{0.000} \\
\hline

\end{tabular}
\end{table}
\begin{table}[H]
\centering
\caption{Empirical rejection rates under a structural-break design with break size $\delta \in \{1,1.5,2\}$. The three tests are $Q_n(s)$, $\tilde r$, and prewhitened $\tilde r$.  We take $s=10$ for $n=500$,   and $s=25$ for $n=2000$. We used 2000 Monte-Carlo replications. Bold numbers indicate rejection rates closest to $0$.}
\label{tab:three_tests_break}
\begin{tabular}{cccccc}
\hline
$\delta$ & $n$ & $d$ & $Q_n(s)$ & $\tilde r$ & PW $\tilde r$\\
\hline
\multirow{8}{*}{1} & \multirow{4}{*}{500} & 0.4 & 0.159 & 0.799 & \textbf{0.076} \\
 &  & 0.3 & 0.247 & 1.000 & \textbf{0.036} \\
 &  & 0.2 & 0.288 & 1.000 & \textbf{0.090} \\
 &  & 0.1 & \textbf{0.256} & 1.000 & \textbf{0.256} \\
\cline{2-6}
 & \multirow{4}{*}{2000} & 0.4 & 0.268 & 1.000 & \textbf{0.062} \\
 &  & 0.3 & \textbf{0.404} & 1.000 & 0.553 \\
 &  & 0.2 & \textbf{0.448} & 1.000 & 0.993 \\
 &  & 0.1 & \textbf{0.495} & 1.000 & 1.000 \\
\hline
\multirow{8}{*}{1.5} & \multirow{4}{*}{500} & 0.4 & 0.061 & 0.679 & \textbf{0.045} \\
 &  & 0.3 & 0.044 & 0.997 & \textbf{0.016} \\
 &  & 0.2 & 0.022 & 1.000 & \textbf{0.019} \\
 &  & 0.1 & \textbf{0.000} & 1.000 & 0.027 \\
\cline{2-6}
 & \multirow{4}{*}{2000} & 0.4 & 0.088 & 1.000 & \textbf{0.030} \\
 &  & 0.3 & \textbf{0.078} & 1.000 & 0.300 \\
 &  & 0.2 & \textbf{0.021} & 1.000 & 0.915 \\
 &  & 0.1 & \textbf{0.004} & 1.000 & 1.000 \\
\hline
\multirow{8}{*}{2} & \multirow{4}{*}{500} & 0.4 & \textbf{0.011} & 0.459 & 0.026 \\
 &  & 0.3 & \textbf{0.003} & 0.985 & 0.006 \\
 &  & 0.2 & \textbf{0.000} & 1.000 & 0.005 \\
 &  & 0.1 & \textbf{0.000} & 1.000 & 0.003 \\
\cline{2-6}
 & \multirow{4}{*}{2000} & 0.4 & 0.016 & 0.999 & \textbf{0.014} \\
 &  & 0.3 & \textbf{0.002} & 1.000 & 0.081 \\
 &  & 0.2 & \textbf{0.000} & 1.000 & 0.485 \\
 &  & 0.1 & \textbf{0.000} & 1.000 & 0.927 \\
\hline
\end{tabular}
\end{table}
\clearpage
\section{Proofs}\label{proofs}
\subsection{ Proof of Lemma \ref{invertible}}
\begin{proof}
\medskip
\noindent
Let $a=(a_1,\ldots,a_s)\in\mathbb{R}^s\setminus\{0\}$ and define
\[
\Phi(x) = \sum_{j=1}^{s} a_j \cos(2\pi jx), \qquad x\in(0,1),
\]
extended by $0$ outside $(0,1)$. Clearly, $\displaystyle \int_{\mathbb{R}}\Phi(x)\,dx=0$. \\We consider three cases according to the value of $d$.
\\  
\textbf{Case 1:} $\mathbf{d=\tfrac{1}{2}}$.
Using the integral representation
\[
-\log r = \int_0^\infty \frac{e^{-rs}-e^{-s}}{s}\,ds,
\]
we have
\[
a'\Sigma^{(c)}a = \iint_{\mathbb{R}^2} \Phi(x)\Phi(y)
\!\int_0^\infty \frac{e^{-s|x-y|}-e^{-s}}{s}\,ds\,dx\,dy.
\]
Denote $\hat f$  the Fourier transform of $f$ and $f*g$ the convolution of $f$ and $g$.
Interchanging the order of integration gives
\[
a'\Sigma^{(c)}a = \int_0^\infty \frac{1}{s}
\Bigl\langle \Phi,\, \Phi*K_s \Bigr\rangle\,ds,
\]
where $K_s(t)=e^{-s|t|}$ and $\widehat{K_s}(\xi)=\dfrac{2s}{s^2+\xi^2}$,  
By the convolution theorem and the Plancherel identity,
\[
\langle \Phi,\,\Phi*K_s\rangle
= \frac{1}{2\pi}<\widehat\Phi,\widehat\Phi\widehat K_s>=\frac{1}{2\pi}\int_{\mathbb{R}} |\widehat{\Phi}(\xi)|^2 \,\widehat{K_s}(\xi)\,d\xi.
\]
Hence,
\[
a'\Sigma^{(c)}a
= \frac{1}{\pi}\int_{\mathbb{R}} |\widehat{\Phi}(\xi)|^2
\Bigl(\int_0^\infty \frac{ds}{s^2+\xi^2}\Bigr)\,d\xi
= \frac{1}{2}\int_{\mathbb{R}}\frac{|\widehat{\Phi}(\xi)|^2}{|\xi|}\,d\xi.
\]
The integral is finite because $\Phi$ has compact support and
\[
\widehat{\Phi}(0)=\int_0^1\Phi(x)\,dx=0,
\]
so that $\widehat{\Phi}$ is continuously differentiable with
$|\widehat{\Phi}(\xi)|\le C|\xi|$ near $0$ and $|\widehat{\Phi}(\xi)|=O(|\xi|^{-1})$ as $|\xi|\to\infty$.  
Since $\Phi\not\equiv0$, the integral is strictly positive, proving positive definiteness.  
\medskip
\noindent
\textbf{Case 2:} $\mathbf{\tfrac{1}{2}< d<\tfrac{3}{2}}$.

Let $\beta=2d-1\in(0,2)$.  Using (3.823) of  \cite{gradshteyn2007tables}, and the fact that for any non integer value $z$,
\[
\Gamma(-z) = -\,\frac{\pi}{z\,\Gamma(z)\,\sin(\pi z)}.
\]
(the case $\beta=1$ is obtained directly as $\Gamma$  function is not defined for nonpositive integers), we obtain
\[
|u|^{\beta}
= c_\beta \int_0^\infty (1-\cos(2\pi us))\,s^{-1-\beta}\,ds,
\quad
c_\beta=\frac{2^{1-\beta}\Gamma(1+\beta)\sin(\pi\beta/2)}{\pi^{1+\beta}}>0,
\]
and the fact that $\int_0^1\Phi(x)\,dx=0$, by Fubini Theorem, we get
\[
a'\Sigma^{(c)}a
= c_\beta \int_0^\infty s^{-1-\beta}\,|\widehat{\Phi}(2\pi s)|^2\,ds > 0.
\]

\medskip
\noindent
\textbf{Case 3:} $\mathbf{-\tfrac{1}{2}<d<\tfrac{1}{2}}$.
Here $\beta=2d+1\in(0,2)$, and we can go back to case 2.

For all three cases above, the same reasoning applies to $\Sigma^{(s)}$.
Thus, in all cases, $\Sigma^{(c)}$ and $\Sigma^{(s)}$ are positive definite, and therefore $\Sigma(d)$ is positive definite.
\end{proof}
\subsection{Proof of Proposition \ref{unit root1}}
\begin{proof}
(i)  It is an extension of Proposition 3.3.1 (ii) of \cite{MR2977317} to the case $d=-1/2.$ 
Using their notations and techniques, with $(\log(n))^{-1}$ instead of $n^{-1-2d}$, we can write for small $\epsilon>0$, with $S_n=Y_1+\cdots+Y_n$,
\begin{eqnarray*}\lefteqn{
(\log(n))^{-1}\textrm{Var}\left(S_n\right)}\\
&&=(\log(n))^{-1}\int_{-\pi}^\pi\left(\frac{\sin(n\lambda/2)}{\sin(\lambda/2)}\right)^2f(\lambda)d\lambda \\
&&=2(\log(n))^{-1}\int_0^\epsilon\left(\frac{\sin(n\lambda/2)}{\sin(\lambda/2)}\right)^2f(\lambda)d\lambda +2(\log(n))^{-1}\int_\epsilon^\pi\left(\frac{\sin(n\lambda/2)}{\sin(\lambda/2)}\right)^2f(\lambda)d\lambda 
\end{eqnarray*}
The second term clearly goes to zero as $n\to\infty.$ Writing 
$$f(\lambda)=\left(\frac{\sin(\lambda/2)}{(\lambda/2)}\right)^2(\lambda/2) g(\lambda),$$ with $g(\lambda)$ converging to $2c_f$ as $\lambda\to0$, we get
\begin{eqnarray*}\lefteqn{
2(\log(n))^{-1}\int_0^\epsilon\left(\frac{\sin(n\lambda/2)}{\sin(\lambda/2)}\right)^2f(\lambda)d\lambda
=4(\log(n))^{-1}\int_0^{\epsilon n}\left(\frac{\sin^2(u/2)}{u}\right)g(\frac{u}{n})du}\\
    &&=2(\log(n))^{-1}\int_0^{\epsilon n}\left(\frac{1-\cos u}{u}\right)g(\frac{u}{n})du\\
    &&=
    2(\log(n))^{-1}\int_0^1\left(\frac{1-\cos u}{u}\right)g(\frac{u}{n})du+
    2(\log(n))^{-1}\int_1^{\epsilon n}\left(\frac{1-\cos u}{u}\right)g(\frac{u}{n})du.
\end{eqnarray*}
The first term goes to zero as $n\to\infty$ since the integrand is uniformly bounded in $u$  and $n$. Writing $g(u)=2c_f+h(u)$ where $h(u)\to0$ as $u\to0$, we get
\begin{eqnarray*}\lefteqn{
(\log(n))^{-1}\int_1^{\epsilon n}\left(\frac{1-\cos(u)}{u}\right)g(\frac{u}{n})du=(\log(n))^{-1}\int_{1/n}^{\epsilon}\left(\frac{1-\cos(nu)}{u}\right)(2c_f+h(u))du}\\
&&=2c_f-2c_f(\log(n))^{-1}\int_{1/n}^{\epsilon}\frac{\cos(nu)}{u}du+(\log(n))^{-1}\int_{1/n}^{\epsilon}\left(\frac{1-\cos(nu)}{u}\right)h(u)du.
\end{eqnarray*}
One integration by parts shows that, as $n\to\infty$,
$$
(\log(n))^{-1}\int_{1/n}^{\epsilon}\frac{\cos(nu)}{u}du\to0.
$$
Also, choosing $\epsilon$ small enough so that $h(u)$ is bounded (by $M$ say) for $u\le\epsilon$, we get
\begin{eqnarray*}\lefteqn{
\left\vert(\log(n))^{-1}\int_{1/n}^{\epsilon}\left(\frac{1-\cos(nu)}{u}\right)h(u)du\right\vert}\\
&&\le2\left(\underset{1/n\le u\le 1/\log(n)}{\max}\vert h(u)\vert\right)(\log(n))^{-1}\int_{1/n}^{1/\log(n)}\frac{du}{u}+2M(\log(n))^{-1}\int_{1/\log(n)}^\epsilon\frac{du}{u}\to0,
\end{eqnarray*}
as $n\to\infty.$\\
(ii) The proof is essentially identical to the proof of Proposition~3.2.2 of \cite{MR2977317}, which establishes the behaviour of the spectral density at the origin for $d \in (-1/2,1/2)$.  
The only modification is the following (we use their notation).

Write the transfer function as
\begin{eqnarray*}\lefteqn{
A(\lambda)=\sum_{j=0}^\infty a_je^{-\i j\lambda}}\\
&&= c_a \sum_{j=1}^\infty e^{-\mathrm{i} j \lambda} j^{-3/2}
+ \left( a_0 + \sum_{j=1}^\infty e^{-\mathrm{i} j \lambda} (a_j - c_a j^{-3/2}) \right)
=: c_a A_1(\lambda) + A_2(\lambda).
\end{eqnarray*}
Since $A(0)=0$, we write
\[
A(\lambda)
= c_a \bigl[ A_1(\lambda) - A_1(0) \bigr]
+ \bigl[ A_2(\lambda) - A_2(0) \bigr].
\]

Using the fact that for $0<\alpha<1$ 
\[
\bigl| e^{\mathrm{i} j \lambda} - 1 \bigr|
\le \min\bigl( |j\lambda|^2,\; |j\lambda| \bigr)\le \vert j\lambda\vert^{\alpha+1}
\]
we obtain (with $0<\alpha<1/2$)
\[
\bigl| A_2(\lambda) - A_2(0) \bigr|
\le |\lambda|^{1+\alpha} \sum_{j=1}^\infty j^{-3/2+\alpha}
= o(|\lambda|),\qquad\textrm{as }\lambda\to0.
\]
The remainder of the proof proceeds exactly as in the proof of Proposition~3.2.2 in Giraitis et al.\end{proof}
\subsection{Proof of Theorem \ref{finite dim}}
\begin{proof} 
It is based on Theorem 4.3.2 of \cite{MR2977317} and our Lemma \ref{invertible}.\\
{\bf Case of $\mathbf{1/2\le d<3/2.}$} Let $\lambda_1,\ldots,\lambda_s$ be the $s$ first  Fourier frequencies. 
Let $Y_k=X_k-X_{k-1}$ be the stationary differenced process, i.e. $Y_k$ is a zero-mean stationary process with memory parameter $d-1\in[-1/2,1/2)$. We have
$$
X_k=\sum_{t=1}^kY_t+X_0:=S_k(Y)+X_0.
$$
Since, for any positive integer $u,s$ and fixed $j=1,\ldots,s$
\begin{equation}\label{sumzero2}
\sum_{t=1}^ue^{2\pi \i tj/u}=0,
\end{equation}
the real and imaginary parts of the DFT built from $(X_k)_{1\le k\le n}$ and $(S_k(Y))_{1\le k\le n}$ are the same. \\
{\bf Sub-case $\mathbf{d=1/2}$:} 
Using Proposition \ref{unit root1}, we obtain that 
$$
\textrm{Var}(S_n(Y))\sim \delta(-1/2)\log(n)
$$
where $\delta(-1/2)$ is defined in (\ref{const delta}).
Also, using the fact that $ab=1/2\left(a^2+b^2-(a-b)^2\right)$, the stationarity of the increments of $S_k(Y)$ and (\ref{sumzero2}), we obtain that for every fixed $i,j=1,\ldots,s$ and as $n\to\infty$, 
\begin{eqnarray}\label{of course}
\lefteqn{
    \textrm{Cov}\left[\left(\frac{1}{n}\sum_{k=1}^n\cos(k\lambda_{i})S_k(Y)\right),\left(\frac{1}{n}\sum_{k=1}^n\cos(k\lambda_{j})S_k(Y)\right)\right]}\nonumber\\
    &&=\frac{1}{n^2}\sum_{k=1}^n\sum_{k'=1}^n\cos\left(\frac{2\pi ik}{n}\right)\cos\left(\frac{2\pi jk'}{n}\right)\mathbb{E}\left(S_k(Y)S_{k'}(Y)\right)\nonumber\\ 
    &&=\frac{1}{2}\frac{1}{n^2}\sum_{k=1}^n\sum_{k'=1}^n\cos\left(\frac{2\pi ik}{n}\right)\cos\left(\frac{2\pi jk'}{n}\right)(-\mathbb{E}(S^2_{\vert k-k'\vert}(Y)))\nonumber\\
    &&\sim\frac{\delta(-1/2)}{2}\frac{1}{n^2}\sum_{k=1}^n\sum_{k'=1}^n\cos\left(\frac{2\pi ik}{n}\right)\cos\left(\frac{2\pi jk'}{n}\right)\left(-\log\left(\frac{\vert k-k'\vert}{n}\right)\right)\nonumber\\
&&\to\frac{\delta(-1/2)}{2}\int_{[0,1]^2}\cos(2\pi ix)\cos(2\pi jy)(-\log(\vert x- y\vert)dxdy=\Sigma_{ij}^{(c)}(1/2)
\end{eqnarray}
and similarly
\begin{eqnarray*}\lefteqn{
    \textrm{Cov}\left[\left(\frac{1}{n}\sum_{k=1}^n\sin(k\lambda_{i})S_k(Y)\right),\left(\frac{1}{n}\sum_{k=1}^n\sin(k\lambda_{j})S_k(Y)\right)\right]}\\
&&\to\frac{\delta(-1/2)}{2}\int_{[0,1]^2}\sin(2\pi ix)\sin(2\pi jy)(-\log(\vert x- y\vert)dxdy,
\end{eqnarray*}
\begin{eqnarray*}\lefteqn{
    \textrm{Cov}\left[\left(\frac{1}{n}\sum_{k=1}^n\cos(k\lambda_{i})S_k(Y)\right),\left(\frac{1}{n}\sum_{k=1}^n\sin(k\lambda_{j})S_k(Y)\right)\right]}\\
   && \to\frac{\delta(-1/2)}{2}\int_{[0,1]^2}\cos(2\pi ix)\sin(2\pi jy)(-\log(\vert x- y\vert)dxdy=0.
   \end{eqnarray*}
   We can write,   with the convention that $a_h=0$ for $h<0$
\begin{eqnarray}\lefteqn{
\frac{1}{n}\sum_{k=1}^n\cos(k\lambda_{i})S_k(Y)
=\sum_{i=1}^n\left(\frac{1}{n}\sum_{k=1}^{i-1}\cos(k\lambda_j)\right)Y_i}\label{develop}\\
&&=
\sum_{i=1}^n\left(\frac{1}{n}\sum_{k=i}^n\cos(k\lambda_j)\right)
\sum_{u=-\infty}^ia_{i-u}\epsilon_u\nonumber\\
&&=\sum_{u=-\infty}^n\left(\
\frac{1}{n}\sum_{k=1}^n\left(\sum_{i=1}^ka_{i-u}\right)\cos(k\lambda_j)\right)\epsilon_u
:=\sum_{u=-\infty}^nd^{(c)}_{n,u}\epsilon_u.\label{combine}
\end{eqnarray}
For $u\le0$,  since
$$
\sum_{i=0}^\infty a_i=0,
$$
$$
d^{(c)}_{n,u}=\left(\
\frac{1}{n}\sum_{k=1}^n\left(\sum_{i=0}^{k-u}a_i\right)\cos(k\lambda_j)\right)=
-\frac{1}{n}\sum_{k=1}^n\left(\sum_{i=k-u+1}^\infty a_i\right)\cos(k\lambda_j)
$$
and hence, since $d-1=-1/2$,  $\vert a_k\vert\sim c k^{-3/2}$, as $k\to\infty$, for some positive constant $c$,
so that
$$
\sum_{i=\ell}^\infty \vert a_i\vert \sim2c\ell^{-1/2},\qquad\textrm{as }\ell\to\infty,
$$
and hence,
$$
\vert d^{(c)}_{n,u}\vert\le 2\vert c\vert n^{-1/2}\to0,\qquad n\to\infty.
$$
Now consider the case $0<u\le n$.
$$
d^{(c)}_{n,u}=\frac{1}{n}\sum_{k=u}^n\left(\sum_{i=0}^{k-u}a_i\right)\cos(k\lambda_j)
$$
Here again, we get
$$
\vert d^{(c)}_{n,u}\vert\le 2\vert c\vert n^{-1/2}\to0\qquad\textrm{as }n\to\infty,
$$
and hence we have shown that 
\begin{equation}\label{uniform u}
   \underset{u\le n}{\sup}\,\,d^{(c)}_{n,u}\to0 \qquad\textrm{as }n\to\infty,
\end{equation}
and using \eqref{of course}, as $n\to\infty$, we get
$$
\sum_{u=-\infty}^n\left(d^{(c)}_{n,u}\right)^2=\mathbb{E}\left[\left(\frac{1}{n}\sum_{k=1}^n\cos(k\lambda_{j})S_k(Y)\right)^2\right]\to\Sigma^{(c)}_{jj}(1/2)>0.
$$
and similarly we obtain 
$$
d^{(s)}_{n,u}:=\frac{1}{n}\sum_{k=1}^n\left(\sum_{i=1}^ka_{i-u}\right)\sin(k\lambda_j)\to0\quad\textrm{uniformly in }u\textrm{ as }n\to\infty.
$$
and that
$$
\sum_{u=-\infty}^n\left(d^{(s)}_{n,u}\right)^2=\mathbb{E}\left[\left(\frac{1}{n}\sum_{k=1}^n\sin(k\lambda_{j})S_k(Y)\right)^2\right]\to\Sigma^{(s)}_{jj}(1/2)>0.
$$
Since $\Sigma(1/2)$ is invertible by virtue of Lemma \ref{invertible}, and therefore by Theorem 4.3.2 of \cite{MR2977317},  we obtain, as $n\to\infty$,
\begin{eqnarray*}\lefteqn{
\Bigg[\left(\frac{1}{n}\sum_{k=1}^n\cos(k\lambda_{1})S_k(Y)\right),
\cdots,\left(\frac{1}{n}\sum_{k=1}^n\cos(k\lambda_{s})S_k(Y)\right),}\\
&& \left(\frac{1}{n}\sum_{k=1}^n\sin(k\lambda_{i})S_k(Y)\right),\cdots,\left(\frac{1}{n}\sum_{k=1}^n\sin(k\lambda_{s})S_k(Y)\right)\Bigg]\overset{d}{\rightarrow}\mathcal{N}\left(0,\Sigma(1/2)\right). 
\end{eqnarray*}

{\bf Sub-case $\mathbf{1/2<d<3/2}$:}  The differenced process $Y_k$ is a zero-mean linear process with memory parameter $d-1\in(-1/2,1/2)$, so that, by Propositions  3.2.1 and 3.3.1 of \cite{MR2977317},
\begin{equation}\label{giraitis stationary}
\textrm{Var}(S_n(Y))\sim\delta(d-1)n^{2(d-1)+1}=\delta(d-1) n^{2d-1}.
\end{equation}
Then we get
\begin{eqnarray}\label{cumsum}\lefteqn{
    \textrm{Cov}\left[\left(\frac{1} {n^{1/2+d}}\sum_{k=1}^n\cos(k\lambda_{i})S_k(Y)\right),\left(\frac{1}{n^{1/2+d}}\sum_{k=1}^n\cos(k\lambda_{j})S_k(Y)\right)\right]}\nonumber\\
    &&=\frac{1}{n^{1+2d}}\sum_{k=1}^n\sum_{k'=1}^n\cos\left(\frac{2\pi ik}{n}\right)\cos\left(\frac{2\pi jk'}{n}\right)\mathbb{E}\left(S_k(Y)S_{k'}(Y)\right)\nonumber\\ 
    &&=\frac{1}{2}\frac{1}{n^{1+2d}}\sum_{k=1}^n\sum_{k'=1}^n\cos\left(\frac{2\pi ik}{n}\right)\cos\left(\frac{2\pi jk'}{n}\right)(-\mathbb{E}(S^2_{\vert k-k'\vert}(Y))\nonumber\\
    &&\sim\frac{\delta(d-1)}{2}\frac{1}{n^{1+2d}}\sum_{k=1}^n\sum_{k'=1}^n\cos\left(\frac{2\pi ik}{n}\right)\cos\left(\frac{2\pi jk'}{n}\right)\left(-\left(\frac{1+\vert k-k'\vert}{n}\right)^{2d-1}\right)n^{2d-1}\nonumber\\
&&\to-\frac{\delta(d-1)}{2}\int_{[0,1]^2}\cos(2\pi ix)\cos(2\pi jy)(\vert x- y\vert^{2d-1})dxdy=\Sigma_{ij}^{(c)}(d).
\end{eqnarray}
Since 
$$
\frac{1} {n^{1/2+d}}\sum_{k=1}^n\cos(k\lambda_{j})S_k(Y)=\frac{1} {n^{1/2+d}}\sum_{k=1}^n\left(\sum_{u=1}^{k-1}\cos(u\lambda_{j})\right)Y_k
$$
and that, because $d>1/2$,
$$
\underset{k\le n}{\sup}\vert z_{n,k}\vert:=\underset{k\le n}{\sup}\frac{1} {n^{1/2+d}}\left\vert\sum_{u=1}^{k-1}\cos(u\lambda_{j})\right\vert\to0
$$
and
$$
\sum_{k=1}^nz_{n,k}^2\le\frac{1}{n^{2d-1}}\to0,
$$
then by Theorem 4.3.2 of \cite{MR2977317} and Lemma \ref{invertible},  we complete the proof of   Theorem  \ref{finite dim}, when  $1/2\le d<3/2$.\\\\
{\bf Proof when $\mathbf{-1/2<d<1/2}$:} 
Before proceeding with the proof of this case, we note that \cite{MR1243575} established a related result for the covariance matrix of the DFT with with a different integral representation than  $\Sigma(d)$ under the rather restrictive assumption that the spectral density $f$ is of the form
\[
f(\lambda)=|\lambda|^{-2d}f^*(\lambda), \qquad f^* \text{ continuous and positive}.
\]
Their proof relies heavily on this condition, which we do not impose here. Instead, we assume linearity in order to establish the joint asymptotic normality of the DFTs. In particular, when $0<d<1/2$, our framework (\ref{positived}) allows for spectral densities that may be unbounded at frequencies other than the origin, provided that the singularity at zero remains dominant.\\
Without loss of generality, we can assume that $\mathbb{E}(X_1)=0$. Writing $X_k=S_k-S_{k-1}$ where, being the sum, $S_k$ is an integrated  process with memory parameter $1/2<d+1<3/2$, and using summation by parts, we obtain that 
\begin{eqnarray}\label{expansion}\lefteqn{
\sum_{k=1}^n\cos(k\lambda_{j})X_k=(\cos(\lambda_j))(S_n-X_0)-\sum_{k=1}^n\left[\cos((k+1)\lambda_j)-\cos(k\lambda_j)\right]S_k}\nonumber\\
&&=(\cos(\lambda_j))S_n+
\sum_{k=1}^n\left[\cos(k\lambda j)(1-\cos(\lambda j))+\sin(k\lambda j)\sin(\lambda j)\right]S_k.
\end{eqnarray}
Hence,  applying (\ref{cumsum}) with $d+1$ instead of $d$, we obtain that, as $n\to\infty$, (since  $\cos(\lambda_j)\sim1$, $1-\cos(\lambda j)\sim(2\pi j/n)^2/2$ and $\sin(\lambda j)\sim2\pi j/n$),
\begin{eqnarray}\label{match2}\lefteqn{
    \textrm{Cov}\left[\left(\frac{1} {n^{1/2+d}}\sum_{k=1}^n\cos(k\lambda_{i})X_k\right),\left(\frac{1}{n^{1/2+d}}\sum_{k=1}^n\cos(k\lambda_{j})X_k\right)\right]}\\
&&
\to\delta(d)\Bigg[1+\frac{1}{2}\int_0^1\left(2\pi i\sin(2\pi ix)+2\pi j\sin(2\pi jx)\right)\left(x^{2d+1}-(1-x)^{2d+1}\right)dx\nonumber\\
&&-\frac{(2\pi i)(2\pi j)}{2}\int_{[0,1]^2}\sin(2\pi ix)\sin(2\pi jy)\vert x- y\vert^{2d+1}dxdy\Bigg]\nonumber\\
&&=\delta(d)\Bigg[-1+(2d+1)\int_0^1x^{2d}\left(\cos(2\pi ix)+\cos(2\pi jx)\right)dx\nonumber\\
&&-\frac{(2\pi i)(2\pi j)}{2}\int_{[0,1]^2}\sin(2\pi ix)\sin(2\pi jy)\vert x- y\vert^{2d+1}dxdy\Bigg]
\end{eqnarray}
and similarly we obtain that
\begin{eqnarray*}\lefteqn{
    \textrm{Cov}\left[\left(\frac{1} {n^{1/2+d}}\sum_{k=1}^n\sin(k\lambda_{i})X_k\right),\left(\frac{1}{n^{1/2+d}}\sum_{k=1}^n\sin(k\lambda_{j})X_k\right)\right]}\\
&&\to-\delta(d)(2\pi^2 ij)\int_{[0,1]^2}\cos(2\pi ix)\cos(2\pi jy)\vert x- y\vert^{2d+1}dxdy.
\end{eqnarray*}
The sequence of coefficients 
$$
z^{(c)}_{n,k}:=\frac{\cos(2\pi jk)}{n^{1/2+d}}\qquad\textrm{and }z^{(s)}_{n,k}:=\frac{\sin(2\pi jk)}{n^{1/2+d}},\qquad k=1,\ldots,n,\qquad j=1,\ldots,s,
$$
in the sequence of random vectors in Theorem \ref{finite dim} clearly 
satisfy conditions (i)  (respectively (ii)) of Proposition 4.3.1 of  \cite{MR2977317} when $0<d<1/2$ (respectively $-1/2<d\le0$) and hence by Theorem 4.3.2 of the same reference, we obtain the result  of the Theorem \ref{finite dim} in the case of linear stationary processes. 
\end{proof}
\subsection{Proof of Theorem \ref{1 over f1}}
\begin{proof}
\noindent\textbf{Proof of part A.}
Recall that
\[
\bar X_n=\frac1n\sum_{k=1}^n X_k
=X_0+\sum_{j=1}^n\Big(1-\frac{j}{n}\Big)Y_j,
\]
so that
\[
\frac{\bar X_n}{d_n}
=\frac{X_0}{d_n}+\sum_{j=1}^n \frac{1-j/n}{d_n}\,Y_j,
\]
where $(Y_j)$ is a zero-mean stationary process with memory parameter $d-1.$ Moreover, $X_0/d_n\to0$ a.s., and
\begin{equation}\label{lindeberg1}
\max_{1\le j\le n}\Big|\frac{1-j/n}{d_n}\Big|
\le \frac{1}{d_n}.
\end{equation}
We now verify the asymptotic variance.\\
{\bf Case $\mathbf{d=1/2}$}
Using Proposition \ref{unit root1},   $\Var(X_k)\sim \delta(-1/2)\log k$ as $k\to\infty$, we have
\[
\Var\!\Big(\frac{\bar X_n}{\sqrt{\log(n)}}\Big)
=\frac{1}{n^2\log(n)}\sum_{k=1}^n\sum_{k'=1}^n \mathbb E(X_kX_{k'}).
\]
Using the identity
\[
\mathbb E(X_kX_{k'})
=\frac12\bigl(\mathbb E(X_k^2)+\mathbb E(X_{k'}^2)
-\mathbb E(X_{|k-k'|}^2)\bigr),
\]
we obtain
\[
\Var\!\Big(\frac{\bar X_n}{\sqrt{\log(n)}}\Big)
=\frac12\left[
\frac{2}{n\log(n)}\sum_{k=1}^n \mathbb E(X_k^2)
-\frac{1}{n^2\log(n)}\sum_{k=1}^n\sum_{k'=1}^n \mathbb E(X_{|k-k'|}^2)
\right].
\]
and
\begin{equation}\label{var log n}
\frac{1}{n\log(n)}\sum_{k=1}^n \mathbb E(X_k^2)=\frac{1}{n\log(n)}\sum_{k=1}^n\left(\textrm{Var}(X_k)+(\mathbb{E}(X_0))^2\right)\to \delta(-1/2),
\end{equation}
since $\sum_{h=1}^n \log h\sim n\log(n)$.
Also, we have
\[
\frac{1}{n^2\log(n)}\sum_{k,k'=1}^n \mathbb E(X_{|k-k'|}^2)
\sim \frac{2\delta(-1/2)}{n^2\log(n)}\sum_{h=1}^{n}(n-h)\log h.
\]
Writing
\[
\sum_{h=1}^{n}(n-h)\log h
=
n\sum_{h=1}^n \log h-\sum_{h=1}^n h\log h,
\]
and using  
$\sum_{h=1}^n h\log h\sim \frac{n^2}{2}\log(n)$, we get
\[
\frac{1}{n^2\log(n)}\sum_{h=1}^{n}(n-h)\log h \to \frac12,
\]
and therefore we get
\begin{equation}\label{1/2 variance}
\Var\!\Big(\frac{\bar X_n}{\sqrt{\log(n)}}\Big)\to \tfrac12\delta(-1/2).
\end{equation}
{\bf Case $\mathbf{1/2<d<3/2}$.} A similar  computation as above, with (\ref{giraitis stationary}) shows that
$$
\Var\!\Big(\frac{\bar X_n}{d_n}\Big)\to \tfrac1{2d+1}\delta(d-1).
$$
Combining these two convergences  with  (\ref{lindeberg1}) and using 
  Proposition~4.3.1 (i) and (ii) of Giraitis et al. concludes the proof of part A.\\
  {\bf Proof of Part B} Recall that here we have $d=1/2.$
\begin{enumerate}
    \item
Writing
\[
\sum_{k=1}^n X_k^2
=\sum_{k=1}^n (X_k-\bar X_n)^2+n\bar X_n^2.
\] Taking expectations and dividing by $n\log(n)$ gives
\[
\frac{1}{n\log(n)}\sum_{k=1}^n \mathbb E(X_k^2)
=
\frac{1}{n\log(n)}\sum_{k=1}^n \mathbb E(X_k-\bar X_n)^2
+\frac{\mathbb E(\bar X_n^2)}{\log(n)}.
\]
which converges to $(1/2)\delta(-1/2)$ according to \eqref{var log n}.
Moreover, we have from \eqref{1/2 variance},
\[
\frac{\mathbb E(\bar X_n^2)}{\log(n)}\to \tfrac12\delta(-1/2).
\]
Therefore we have
\[
\frac{1}{n\log(n)}\sum_{k=1}^n \mathbb E(X_k-\bar X_n)^2\to \tfrac12\delta(-1/2).
\]
Thus, to complete the proof, it suffices to show that
\begin{equation}\label{lim var}
\Var\!\left(
\frac{1}{n\log(n)}\sum_{k=1}^n (X_k-\bar X_n)^2
\right)\to0.
\end{equation}

Recall that if
\[
U=\sum_{i=0}^\infty a_i\varepsilon_i,
\qquad
V=\sum_{i=0}^\infty b_i\varepsilon_i,
\]
where $(\varepsilon_i)$ are i.i.d., centered, with finite fourth moment, then  the moment-cumulant relationship and the multilinearity of cumulants imply that
\begin{eqnarray*}
\Cov(U^2,V^2)&=&2\,\Cov(U,V)^2+\textrm{Cum}(U,U,V,V)\\
&=&2\,\Cov(U,V)^2+\eta_4\sum_{i=0}^\infty a_i^2 b_i^2,
\end{eqnarray*}
where $\eta_4=\textrm{cum}_4(\varepsilon_1)$ is the fourth cumulant of $\varepsilon_i$.

Now set
\[
U^{(k)}:=X_k-\bar X_n
=\sum_{s=-\infty}^n a_{s,n}^{(k)}\,\varepsilon_s,
\qquad k=1,\dots,n,
\]
where
\[
a_{s,n}^{(k)}
=\sum_{j=1}^n\Big(\frac{j}{n}-\mathbf 1_{\{j>k\}}\Big)a_{j-s}.
\]
In what follows, we assume without loss of generality that $\mathbb{E}(\epsilon_1^2)=1.$
Clearly, $|a_{s,n}^{(k)}|$ is uniformly bounded in $s,n,k$ since $(a_j)$ is absolutely summable.

\medskip
\noindent\emph{(i) Sum over $s\le0$.}
Since $a_j\sim j^{-3/2}$,
\[
\sum_{s=-\infty}^{0}\bigl|a_{s,n}^{(k)}\bigr|^2
\le
C\sum_{s=0}^{\infty}\left(\sum_{j=1}^{n}(j+s)^{-3/2}\right)^2
\le
C\sum_{s=1}^{\infty}\bigl(s^{-1/2}-(n+s)^{-1/2}\bigr)^2.
\]
Using $(u-v)^2\le u^2-v^2$ for $0\le v\le u$ gives
\begin{eqnarray*}\lefteqn{
\sum_{s=-\infty}^{-1}\bigl|a_{s,n}^{(k)}\bigr|^2}\\
&&\le
C\sum_{s=1}^{\infty}\Bigl(s^{-1}-(n+s)^{-1}\Bigr)
=
C\left(\sum_{s=1}^{n}\Bigl(s^{-1}-(n+s)^{-1}\Bigr)+\sum_{s=n+1}^{\infty}\Bigl(s^{-1}-(n+s)^{-1}\Bigr)\right).
\end{eqnarray*}
The first sum is bounded by $C\sum_{s=1}^{n}s^{-1}=O(\log(n))$, and for the second sum,
\[
\sum_{s=n+1}^{\infty}\Bigl(s^{-1}-(n+s)^{-1}\Bigr)
=\sum_{s=n+1}^\infty \frac{n}{s(n+s)}
\le n\sum_{s=n+1}^\infty \frac{1}{s^2}=O(1).
     \]
Hence
\[
\sum_{s=-\infty}^{0}\bigl|a_{s,n}^{(k)}\bigr|^2=O(\log(n)),
\]
uniformly in $k$.

\medskip
\noindent\emph{(ii) Sum over $k+1\le s\le n$.}
For $s\in\{k+1,\dots,n\}$,
\[
a_{s,n}^{(k)}=\sum_{j=s}^n\Big(\frac{j}{n}-1\Big)a_{j-s}
=\sum_{i=0}^{n-s}\Big(\frac{i+s}{n}-1\Big)a_i,
\]
Using the fact that $\sum_{i=0}^\infty a_i=0$,
the partial sums satisfy $\sum_{i=0}^m a_i\sim C m^{-1/2}$. Consequently,
\[
\sum_{s=k+1}^n \bigl(a_{s,n}^{(k)}\bigr)^2
\le
C\sum_{s=1}^n \left[\left(1-\frac{s}{n}\right)^{1/2}n^{-1/2}+\left(1-\frac{s}{n}\right)\left(1-\frac{s}{n}\right)^{-1/2}n^{-1/2}\right]^2\to C.
\]
\medskip
\noindent\emph{(iii) Sum over $0\le s\le k$.}
\[
\sum_{s=0}^{k}\left(\sum_{j=s}^{k}\frac{j}{n}\,a_{j-s}\right)^2
=
\sum_{s=0}^{k}\left(\sum_{i=0}^{k-s}\frac{i+s}{n}\,a_i\right)^2,
\]
\[
\sum_{s=0}^{k}\left(\sum_{i=0}^{k-s}\frac{i+s}{n}\,a_i\right)^2
\le
C\sum_{s=0}^{k}\left(\frac{1}{n}\Big((k-s)^{1/2}+s\,(k-s)^{-1/2}\Big)\right)^2=O(\log(n))
\]
uniformly in $k\le n$.
Now let us investigate
$$
\left[\textrm{Cov}\left(U^{(k)},U^{(k')}\right)\right]^2.
$$
Using the fact that $(a+b+c)^2\le 3(a^2+b^2+c^2)$, we get
\begin{eqnarray*}\lefteqn{
\left[\textrm{Cov}\left(U^{(k)},U^{(k')}\right)\right]^2}\\
&&\le3\left[\sum_{s=n+1}^\infty\left(\left(\sum_{j=1}^n\vert a_{j+s}\vert\right)^2\right)\right]^2\\
&&+3\left[\sum_{s=0}^n\left(\left(\sum_{j=1}^k\frac{j}{n}\vert a_{j+s}\vert+\sum_{j=k+1}^n\left(1-\frac{j}{n}\right)\vert a_{j+s}\vert\right)\left(\sum_{j=1}^{k'}\frac{j}{n}\vert a_{j+s}\vert+\sum_{j=k'+1}^n\left(1-\frac{j}{n}\right)\vert a_{j+s}\vert\right)\right)\right]^2\\
&&+3\left[\sum_{s=1}^n\left(\sum_{j=s}^n\left(\frac{j}{n}-\mathbf 1_{\{j>k\}}\right)a_{j-s}\right)
\left(\sum_{j=s}^n\left(\frac{j}{n}-\mathbf 1_{\{j>k'\}}\right)a_{j-s}\right)\right]^2\\
&&:=3\left(I^2_1(n)+I^2_2(n,k,k')+I^2_3(n,k,k')\right)
\end{eqnarray*}
We have
$$
I_1(n)\le 4\left(\sum_{s=n+1}^\infty\frac{n}{s(n+s)}\right)^2=O(1).
$$
Also, for all $k,k'\le n$,
$$
I_2(n,k,k')\le
\sum_{s=0}^n\left(\left(\sum_{j=1}^n\vert a_{j+s}\vert\right)^2\right)=
O\left(\sum_{s=1}^ns^{-1}\right)=O\left(\log(n)\right).
$$
So, we have
\begin{eqnarray*}\lefteqn{
\frac{1}{n^2\log^2 n}\sum_{k=1}^n\sum_{k'=1}^nI^2_2(n,k,k')=O\left(\frac{1}{n^2\log(n)}\sum_{k=1}^n\sum_{k'=1}^nI_2(n,k,k')\right)}\\
&&=\frac{O(1)}{n^2\log(n)}\sum_{k=1}^n\sum_{k'=1}^n\sum_{s=0}^n\\
&&\left(\left(\sum_{j=1}^k\frac{j}{n}\vert a_{j+s}\vert+\sum_{j=k+1}^n\left(1-\frac{j}{n}\right)\vert a_{j+s}\vert\right)\left(\sum_{j=1}^{k'}\frac{j}{n}\vert a_{j+s}\vert+\sum_{j=k'+1}^n\left(1-\frac{j}{n}\right)\vert a_{j+s}\vert\right)\right).
\end{eqnarray*}
The first product is equivalent to
\begin{eqnarray*}\lefteqn{
\frac{O(1)}{\log(n)}\sum_{k=1}^n\sum_{k'=1}^n\sum_{s=0}^n\sum_{j=1}^k\sum_{j'=1}^{k'}\left(\frac{j}{n}\left(\frac{j+s}{n}\right)^{-3/2}\frac{j'}{n}\left(\frac{j'+s}{n}\right)^{-3/2}\right)n^{-5}}\\
&&=\frac{O(1)}{\log(n)}\int_0^1\left(\int_0^1\left(\int_0^zx(x+y)^{-3/2}dx\right)^2dz\right)dy\to0\textrm{ as }n\to\infty,
\end{eqnarray*}
since the integral is finite. The other terms treat similarly. We now show that as $n\to\infty$,
\begin{equation}\label{third term}
\frac{1}{n^2\log^2 n}\sum_{k=1}^n\sum_{k'=1}^nI^2_3(n,k,k')\to0.
\end{equation}
\begin{eqnarray*}\lefteqn{
I^2_3(n,k,k')}\\
&&\le\left(\sum_{s=1}^n\left(\sum_{j=s}^n\left(\frac{j}{n}-\mathbf 1_{\{j>k\}}\right)a_{j-s}\right)^2\right)\left(\sum_{s=1}^n\left(\sum_{j=s}^n\left(\frac{j}{n}-\mathbf 1_{\{j>k'\}}\right)a_{j-s}\right)^2\right).
\end{eqnarray*}
Now, we have
\begin{eqnarray*}\lefteqn{
\left(\sum_{s=1}^n\left(\sum_{j=s}^n\left(\frac{j}{n}-\mathbf 1_{\{j>k\}}\right)a_{j-s}\right)^2\right)
}\\
&&\le2\sum_{s=1}^k\left[\left(\sum_{j=s}^k\frac{j}{n}a_{j-s}\right)^2+\left(\sum_{j=k+1}^n\left(\frac{j}{n}-1\right)a_{j-s}\right)^2\right]\\ 
&&+\sum_{s=k+1}^n\left(\sum_{j=s}^n\left(\frac{j}{n}-1\right)a_{j-s}\right)^2\\
&&\le4\sum_{s=1}^k\left(\frac{(k-s)}{n^2}+(k-s)^{-1}+(k-s)^{-1}\right) \\
&&+\sum_{s=k+1}^n\left(\frac{(n-s)^{1/2}}{n}+\left(1-\frac{s}{n}\right)(n-s)^{-1/2}\right)=O(\log(n)),
\end{eqnarray*}
uniformly in $k$ and hence $I_3(n,k,k')=O(\log(n))$ uniformly in $k,k'\le n$. Therefore, we can write
\begin{eqnarray}\label{a new star}\lefteqn{
\frac{1}{n^2\log^2 n}\sum_{k=1}^n\sum_{k'=1}^nI^2_3(n,k,k')=\frac{O(1)}{n^2\log(n)}\sum_{k=1}^n\sum_{k'=1}^nI_3(n,k,k')}\\
&&\le\frac{O(1)}{n^2\log(n)}\sum_{s=1}^n\sum_{k=1}^n\sum_{k'=1}^n\\
&&
\left(\left\vert\sum_{j=s}^k\frac{j}{n}a_{j-s}\right\vert+\left\vert\sum_{j=k+1}^n\left(\frac{j}{n}-1\right)a_{j-s}\right\vert\right)\left(\left\vert\sum_{j=s}^{k'}\frac{j}{n}a_{j-s}\right\vert+\left\vert\sum_{j=k'+1}^n\left(\frac{j}{n}-1\right)a_{j-s}\right\vert\right).
\end{eqnarray}
Decomposing the sum over $s$, we find that
\begin{eqnarray*}\lefteqn{
\frac{1}{n^2}\sum_{s=1}^n\sum_{k=s}^n\sum_{k'=s}^n\left(\left\vert\sum_{j=s}^k\frac{j}{n}a_{j-s}\right\vert\left\vert\sum_{j=s}^k\frac{j}{n}a_{j-s}\right\vert\right)}\\
&&=O\left(\frac{1}{n^2}\sum_{s=1}^n\sum_{k=s}^n\sum_{k'=s}^n\left(\frac{(k-s)^{1/2}+s(k-s)^{-1/2}}{n}\right)\left(\frac{(k'-s)^{1/2}+s(k'-s)^{-1/2}}{n}\right)\right).
\end{eqnarray*}
As $n\to\infty$, the normalized sum inside the parentheses converges to the integral
$$
\int_0^1\left(\int_y^1[(x-y)^{1/2}+y(x-y)^{-1/2}]dx\right)^2dy<\infty.
$$
\begin{eqnarray*}\lefteqn{
\frac{1}{n^2}\sum_{s=1}^n\sum_{k=s}^n\sum_{k'=s}^n\left(\sum_{j=k+1}^n\vert a_{j-s}\vert\sum_{j=k'+1}^n\vert a_{j-s}\vert\right)}\\
&&=O\left(\sum_{s=1}^n\sum_{k=s}^n\sum_{k'=s}^n\sum_{j=k+1}^n\sum_{j'=k'+1}^n\left(\frac{j-s}{n}\right)^{-3/2}\left(\frac{j'-s}{n}\right)^{-3/2}n^{-5}\right).
\end{eqnarray*}
As $n\to\infty$, the sum inside the parentheses converges to
$$\int_0^1\left(\int_y^1(x-y)^{-1/2}dx\right)^2dy=2.
$$
Also, one of the cross terms (the other treats identically) is bounded by
\begin{eqnarray*}\lefteqn{
\frac{1}{n^2}
\sum_{s=1}^n\sum_{k=s}^n\sum_{k'=s}^n\sum_{j'=k'+1}^n
\left\vert\sum_{j=s}^k\frac{j}{n}a_{j-s}\right\vert\sum_{j'=k'+1}^n\vert a_{j-s}\vert}\\
&&=O\left(\sum_{s=1}^n\sum_{k=s}^n\sum_{k'=s}^n\sum_{j'=k'+1}^n\left(\left(\frac{k-s}{n}\right)^{1/2}+\frac{s}{n}\left(\frac{k-s}{n}\right)^{-1/2}\right)\left(\frac{j'-s}{n}\right)^{-3/2}n^{-4}\right).
\end{eqnarray*}
The sum inside converges to the integral
$$
\int_0^1\left(\left(\int_y^1(z-y)^{1/2}+y(z-y)^{-1/2}dz\right)\int_y^1(x-y)^{-1/2}dx\right)dy<\infty.
$$
Summing over $s>\max(k,k')$ gives
\begin{eqnarray*}\lefteqn{
\frac{1}{n^2}\sum_{s=1}^n\sum_{k=1}^s\sum_{k'=1}^s\left(\left\vert\sum_{j'=s}^n\left(\frac{j}{n}-1\right)a_{j'-s}\right\vert\right)
\left(\left\vert\sum_{j=s}^n\left(\frac{j}{n}-1\right)a_{j-s}\right\vert\right)}\\
&&=O\left(\sum_{s=1}^n\left[\frac{(n-s)^{1/2}}{n}+\left(1-\frac{s}{n}\right)(n-s)^{-1/2}\right]^2\right)
\end{eqnarray*}
and the sum inside the parentheses converges to 
$$
\int_0^14xdx=2.
$$
Summing over $s$ between $\min(k,k')$ and $\max(k,k')$ gives the following (taking $k<k'$)
\begin{eqnarray*}\lefteqn{
\frac{1}{n^2}\sum_{k'=1}^n\sum_{k=1}^{k'}\sum_{s=k}^{k'}
\left(\left\vert\sum_{j=s}^n\left(\frac{j}{n}-1\right)a_{j-s}\right\vert\right)\left(\left\vert\sum_{j=s}^{k'}\frac{j}{n}a_{j-s}\right\vert+\left\vert\sum_{j=k'+1}^n\left(\frac{j}{n}-1\right)a_{j-s}\right\vert\right)}\\
&&=O\left(\frac{1}{n}\sum_{k'=1}^n\sum_{s=1}^{k'}n^{-1/2}\left(\frac{k'-s}{n}\right)^{-1/2}n^{-1/2}\right).
\end{eqnarray*}
The sum converges to 
$$
\int_0^1\int_0^x(x-y)^{-1/2}dydx=\frac{4}{3}.
$$
This completes the proof of (\ref{third term}), which in turn completes the proof of (\ref{lim var}).
\item We will just show that 
\begin{equation}\label{dif var}
\textrm{Var}(D_n)\sim\frac{\delta(-1/2)}{2}\log(n).
\end{equation}
The rest is quite similar to the proof of part A. With
$$
\bar X^{(m)}=\frac{1}{m}\sum_{i=n-m+1}^nX_i,
$$
write
\[
\mathrm{Var}(D_n)
=\mathrm{Var}(\bar X_m)+\mathrm{Var}(\bar X^{(m)})
-2\,\mathrm{Cov}(\bar X_m,\bar X^{(m)}).
\]
First, with $m=[\sqrt{n}],$ using (\ref{1/2 variance}), we obtain that
$$
\mathrm{Var}(\bar X_m)\sim\frac{\delta(-1/2)}{2}\log m\sim\frac{\delta(-1/2)}{4}\log(n).
$$
Using the fact that $n-m+i\sim n$, uniformly in $i\le m$, similar computations leading to (\ref{1/2 variance}) also show that
$$
\mathrm{Var}(\bar X^{(m)})\sim
\delta(-1/2)\log(n)-\frac{\delta(-1/2)}{2}\log m\sim
\frac{3\delta(-1/2)}{4}\log(n).
$$

Next,
\[
\mathrm{Cov}(\bar X_m,\bar X^{(m)})
=\frac{1}{m^2}\sum_{i=1}^m\sum_{j=1}^m \mathbb{E}(X_iX_{n-m+j}).
\]
Using
\[
\mathbb{E}(X_iX_{n-m+j})
=\frac12\Big(
\mathbb{E}(X_{n-m+j}^2)+\mathbb{E}(X_i^2)-\mathbb{E}(X_{n-m+j-i}^2)
\Big),
\]
and noting that both $\mathbb{E}(X_{n-m+j}^2)$ and $\mathbb{E}(X_{n-m+j-i}^2)$ are aymptotically equivalent  to $\delta(-1/2)\log(n)$ (uniformly in $i,j\le m$), these terms cancel, yielding
\[
\mathrm{Cov}(\bar X_m,\bar X^{(m)})
\sim \frac{1}{2m^2}\sum_{i,j=1}^m \mathbb{E}(X_i^2)
=\frac{1}{2m}\sum_{i=1}^m \mathbb{E}(X_i^2)
\sim \frac{\delta(-1/2)}{2}\log m
\sim \frac{\delta(-1/2)}{4}\log(n).
\]

Therefore,
\[
\mathrm{Var}(D_n)
\sim \frac{\delta(-1/2)}{4}\log(n)
+\frac{3\delta(-1/2)}{4}\log(n)
-2\cdot \frac{\delta(-1/2)}{4}\log(n)
=\frac{\delta(-1/2)}{2}\log(n).
\].
\item Reproducing computations similar to those in the proof of Theorem \ref{finite dim}, we obtain that for every fixed  $j\ge1$, and as $n\to\infty$,
\[
\Cov\!\left(
\frac{1}{n\sqrt{\log(n)}}\sum_{k=1}^n X_k,\;
\frac{1}{n}\sum_{k=1}^n \cos(k\lambda_j)\,X_k
\right)
=
O\!\left(\frac{1}{\sqrt{\log(n)}}\right)
\;\longrightarrow\;0.
\]
This yields the claimed asymptotic independence between $\bar X/\sqrt{\log(n)}$ and $Z_n(s,1/2)$. Similar argument holds for $D_n.$
\end{enumerate}
\end{proof}
\subsection{Proof of Proposition \ref{periodogram variance}}
\begin{proof}
{\bf Proof of Part 1.}
According to Theorem \ref{finite dim},
\begin{equation}\label{z zprime}
n^{-2d}\sum_{j=1}^sI_n(\lambda_j)=\|Z_n(s,d)\|^2\overset{d}{\longrightarrow}Z(d)'Z(d)
\end{equation}
where $Z(d)=\mathcal{N}\left(0,\Sigma(d)\right)$ where $\Sigma(d)$ is defined in (\ref{Sigma}). \\
{\bf Case  $\mathbf{d\in(-1/2,1/2)}$.}  Since $X_k$ is a stationary linear process and therefore ergodic, 
$$
D_n=\bar X_m-\frac{1}{m}\sum_{i=n-m+1}^nX_i\overset{P}{\to}\mathbb{E}(X_1)-\mathbb{E}(X_1)=0,
$$
and
$$\frac{1}{n}\sum_{k=1}^n(X_k-\bar X_n)^2\overset{P}{\to}\textrm{Var}(X_1)>0,
$$
so that
\begin{equation}\label{lim var1}
\frac{D_n^2}{\frac{1}{n}\displaystyle\sum_{k=1}^n(X_k-\bar X_n)^2}\overset{P}{\to}0.
\end{equation}
Moreover,
$$
\frac{\log (n)\displaystyle\sum_{j=1}^sI_n(\lambda_j)}{\displaystyle\sum_{k=1}^n(X_k-\bar X_n)^2}=n^{2d-1}\log(n)\frac{n^{-2d}\displaystyle\sum_{j=1}^sI_n(\lambda_j)}{n^{-1}\displaystyle\sum_{k=1}^n(X_k-\bar X)^2}\overset{P}{\to}0
$$
by \eqref{z zprime}, \eqref{lim var1} and Slutsky Theorem, and hence $\tilde Q_n(s)\overset{P}{\to}0$.\\
{\bf Case  $\mathbf{d=1/2}$.} From (\ref{Sigma}) and  (\ref{Sigma variance}), $\Sigma(1/2)=(1/2)\delta(-1/2)\Sigma$, the result is an immediate consequence of (\ref{z zprime}) and Theorem  \ref{1 over f1}.\\
{\bf Case $\mathbf{1/2<d<3/2}$.} 
\begin{equation}\label{final ratio}
\tilde Q_n(s)\ge\frac{\log (n)\displaystyle\sum_{j=1}^sI_n(\lambda_j)}{\displaystyle\sum_{k=1}^nX_k^2}=\frac{n^{-2d}\displaystyle\sum_{j=1}^sI_n(\lambda_j)}{\dfrac{1}{n^{2d}\log (n)}\displaystyle\sum_{k=1}^nX_k^2}.
\end{equation}
The numerator converges in distribution to a positive random variable by (\ref{z zprime}). Writing $X_k=\sum_{j=1}^kY_j+X_0$ where $Y_j$ is a zero-mean linear stationary process with memory parameter $d-1$, we obtain that
$$
\mathbb{E}(X_k^2)=O\left(k^{2(d-1)+1}\right),\qquad\textrm{as }k\to\infty,
$$
we obtain that the denominator is $O_P(1/\log(n))=o_P(1)$, as $n\to\infty$, so that the right hand ratio in (\ref{final ratio}) converges  in probability to infinity.\\\\
{\bf Proof of Part 2.}
Writing $X_t=g_n(t)+Y_t=n^\beta g\left(\frac{t}{n}\right)+Y_t$, we have
\begin{eqnarray}\label{decomp}
I_n(\lambda_j)&=&\vert D_{n,Y}(j)\vert^2+n^{2\beta+1}\vert D_{n,g}(j)\vert^2
+2n^{\beta+1/2}\,\,\textrm{Re}\left(D_{n,g}(j)\overline{D_{n,Y}(j)}\right)
\end{eqnarray}
where
$$
D_{n,g}(j)=\sum_{t=1}^ng\left(\frac{t}{n}\right)e^{\i 2\pi t j/n}\frac{1}{n},
$$
and
$$
D_{n,Y}(j)=\frac{1}{\sqrt{ n}}\sum_{t=1}^nY_te^{\i 2\pi t j/n}.
$$
Under assumption (\ref{nonzero}), for at least one index $j=1,\ldots,s$, we obtain via  Riemann sum approximation of an integral,
$$
\vert D_{n,g}(j)\vert^2\sim c>0, \textrm{ as }n\to\infty,\qquad\textrm{for some positive constant }c.
$$
Since $Y_t$ is a zero-mean linear stationary process with memory parameter $d\in(-1/2,1/2)$, then from Theorem \ref{finite dim}, $D_{n,Y}(j)=O_P(n^d)$ we have as $n\to\infty$, and hence, since $\beta\ge0$,
\begin{equation}\label{num1}
\frac{I_n(\lambda_j)}{ n^{2\beta+1}}\overset{P}{\to}c.
\end{equation}
Also, since $Y_k$ is a zero mean stationary process and using the fact that $g$ is bounded,
\begin{eqnarray*}\lefteqn{
\mathbb{E}\left(\frac{1}{n^{2\beta+1}\log(n)}\sum_{k=1}^n(X_k-\bar X)^2\right)}\\
&&\le\frac{1}{n^{2\beta+1}\log(n)}\sum_{k=1}^n\mathbb{E}(X_k^2)=\frac{1}{n^{2\beta+1}\log(n)}\sum_{k=1}^n\left(n^{2\beta}\left(g\left(\frac{k}{n}\right)\right)^2+\mathbb{E}(Y_1^2)\right)\\ 
&&=O\left(\frac{1}{\log(n)}\right)\to0,\qquad\textrm{as }n\to\infty,
\end{eqnarray*}
and therefore
$$
\frac{\log(n)I_n(\lambda_j)}{\sum_{k=1}^n(X_k-\bar X)^2}\overset{P}{\to}\infty
$$
and hence $\tilde Q_n(s)\overset{P}{\to}\infty.$
\end{proof}
\bibliographystyle{apalike}

\end{document}